\newtheorem{definition}{Definition}
\newtheorem{lemma}{Lemma}
\newtheorem{corollary}{Corollary}
\newtheorem{proposition}{Proposition}
\newtheorem{theorem}{Theorem}
\DeclareMathOperator{\mydiv}{\: div \:}
\DeclareMathOperator{\mypos}{pos}
\begin{document}

\begin{center}
\vskip 1cm{\LARGE\bf The Number System of the Permutations Generated by Cyclic Shift}
\vskip 1cm
\large
St\'{e}phane Legendre\\
Team of Mathematical Eco-Evolution\\
Ecole Normale Sup\'{e}rieure\\
75005 Paris \\
France \\
legendre@ens.fr\\
\end{center}

\vskip .2 in

\begin{abstract}
A number system coding for the permutations generated by cyclic shift is described. The system allows to find the rank of a permutation given how it has been generated, and to determine a permutation given its rank. It defines a code describing the symmetry properties of the set of permutations generated by cyclic shift. This code is conjectured to be a combinatorial Gray code listing the set of permutations: this corresponds to an Hamiltonian path of minimal weight in an appropriate regular digraph.
\end{abstract}

\section{Introduction}\label{introduction}

Since the work of Laisant in 1888 \cite{laisant_1888} -- and even since Fischer and Krause in 1812 \cite{fischer&krause_1812} according to Hall and Knuth \cite{marshall&knuth_1965} --, it is known that the factorial number system codes for the permutations generated in lexicographic order. More precisely, when the set of all permutations on $n$ symbols is ordered by lexicographic order, the rank of a permutation written in the factorial system provides a code determining the permutation. The code specifies which interchanges of the symbols according to lexicographic order have to be performed to generate the permutation. Conversely, the rank of a permutation can be computed from its code. This coding has been rediscovered several times since (e.g., Lehmer \cite{lehmer_1960}).

In this study, we describe a number system on the finite ring $\mathbb{Z}_{n!}$ coding for the permutations generated by cyclic shift. When the set  $\mathcal{S}_{n}$ of permutations is ordered according to generation by cyclic shift, the rank of a permutation written in this number system entirely specifies how the permutation has been generated. Conversely, the rank can be computed from the code. This number system is a special case of a large class of methods presented by Knuth \cite{knuth_2005} for generating $\mathcal{S}_{n}$.

We shall describe properties of $\mathcal{S}_{n}$ generated by cyclic shift:
\begin{enumerate}
\item A decomposition into $k$-orbits;
\item The symmetries;
\item An infinite family of regular digraphs associated with $\{\mathcal{S}_{n};n \ge 1\}$;
\item A conjectured combinatorial Gray code generating the permutations on $n$ symbols. The adjacency rule associated with this code is that the last symbols of each permutation match the first symbols of the next optimally.
\end{enumerate}

\section{Number system}\label{the_number_system}

For any positive integer $a$, the ring $(\mathbb{Z}/a\mathbb{Z},+, \times)$ of integers modulo $a$ is denoted $\mathbb{Z}_{a}$. The set $\mathbb{Z}_{a}$ is identified with a subset of the set $\mathbb{N}$ of natural integers.

\begin{proposition}\label{number_system}
For $n \ge 2$, any element $\alpha \in \mathbb{Z}_{n!}$ can be uniquely represented as
\begin{displaymath}
\alpha = \sum_{i=0}^{n-2} \alpha_{i}\varpi_{n,i}, \quad \alpha_{i} \in \mathbb{Z}_{n-i},
\end{displaymath}
with the base elements 
\begin{displaymath}
\varpi_{n,0}=1, \qquad \varpi_{n,i}=n(n-1) \cdots (n-i+1), \qquad i=1, \ldots, n-2.
\end{displaymath}
\end{proposition}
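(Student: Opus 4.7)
The plan is to recognize this as a standard mixed-radix number system and apply the usual existence/uniqueness argument. The structural fact driving everything is the recursion $\varpi_{n,i+1} = (n-i)\varpi_{n,i}$, which is immediate from the falling-factorial definition. Since the digit at position $i$ ranges over $\mathbb{Z}_{n-i} = \{0,1,\ldots,n-i-1\}$ and the ``carry base'' between positions $i$ and $i+1$ is exactly $n-i$, the system is well-formed. Furthermore, the number of admissible tuples $(\alpha_0,\ldots,\alpha_{n-2})$ equals $\prod_{i=0}^{n-2}(n-i) = n!$, matching $|\mathbb{Z}_{n!}|$, so it will suffice to prove either existence or uniqueness of the representation.

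For existence, I would induct on $n$. The base case $n=2$ is immediate: any $\alpha \in \mathbb{Z}_2$ equals $\alpha\cdot\varpi_{2,0}$. For the inductive step, perform Euclidean division by $n$ to write $\alpha = n\beta + \alpha_0$ with $\alpha_0 \in \mathbb{Z}_n$ and $\beta \in \mathbb{Z}_{(n-1)!}$. By the inductive hypothesis, $\beta = \sum_{j=0}^{n-3}\beta_j\varpi_{n-1,j}$ with $\beta_j \in \mathbb{Z}_{n-1-j}$. Using the identity $n\,\varpi_{n-1,j} = \varpi_{n,j+1}$ (another direct consequence of the falling-factorial definition), multiplying through by $n$ and adding $\alpha_0$ yields the required decomposition, where one sets $\alpha_i := \beta_{i-1}$ for $i = 1,\ldots,n-2$, so that $\alpha_i \in \mathbb{Z}_{n-1-(i-1)} = \mathbb{Z}_{n-i}$, as required.

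Uniqueness then follows by the cardinality match mentioned above. Equivalently, one can verify directly that the map $(\alpha_0,\ldots,\alpha_{n-2}) \mapsto \sum \alpha_i\varpi_{n,i}$ lands in $\{0,1,\ldots,n!-1\}$ by computing the maximum value via a telescoping argument:
\begin{displaymath}
\sum_{i=0}^{n-2}(n-i-1)\varpi_{n,i} \;=\; \sum_{i=0}^{n-2}\bigl(\varpi_{n,i+1}-\varpi_{n,i}\bigr) \;=\; \varpi_{n,n-1} - 1 \;=\; n! - 1,
\end{displaymath}
using $(n-i-1)\varpi_{n,i} = \varpi_{n,i+1} - \varpi_{n,i}$. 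Combined with existence, this forces the representation map to be a bijection between the set of admissible tuples and $\mathbb{Z}_{n!}$.

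The main obstacle, if there is one, is purely bookkeeping: keeping the two falling-factorial index conventions aligned in the induction (the shift $\varpi_{n,i} = n\,\varpi_{n-1,i-1}$) and confirming that the digit-range constraints $\alpha_i \in \mathbb{Z}_{n-i}$ transport correctly between the base-$n$ and base-$(n-1)$ systems. There is no deeper combinatorial content beyond recognizing the mixed-radix structure.
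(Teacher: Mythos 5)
Your proposal is correct. The existence argument is the same underlying idea as the paper's --- repeated Euclidean division by $n, n-1, \ldots$ --- but packaged differently: the paper runs the division as an explicit iteration for fixed $n$, defining $\alpha^{(i+1)} = \alpha^{(i)} \mydiv (n-i)$ and summing the relations $\varpi_{n,i}\alpha^{(i)} - \varpi_{n,i+1}\alpha^{(i+1)} = \alpha_i\varpi_{n,i}$ with a telescoping cancellation, whereas you peel off only the least significant digit via $\alpha = n\beta + \alpha_0$ and recurse on $n$ (this is in fact exactly the inductive skeleton the paper reuses later to prove Theorem 1, so your version arguably unifies the two proofs). Where you genuinely diverge, and improve on the paper, is uniqueness: the paper merely remarks that ``the digits $\alpha_i$ are uniquely determined,'' which establishes that the greedy algorithm is well-defined but does not by itself rule out a second admissible digit string representing the same element. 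Your counting argument --- there are $\prod_{i=0}^{n-2}(n-i) = n!$ admissible tuples, existence makes the evaluation map surjective onto $\mathbb{Z}_{n!}$, and a surjection between finite sets of equal cardinality is a bijection --- closes that gap cleanly; your telescoping identity $\sum_{i=0}^{n-2}(n-i-1)\varpi_{n,i} = n!-1$ is the same computation the paper later records as equation (3) of Lemma 2, so nothing there is off track. All index bookkeeping checks out: $\varpi_{n,j+1} = n\,\varpi_{n-1,j}$ is the $k=1$ case of the paper's relation (1), and the digit ranges transport as you claim.
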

The $\alpha_{i}$'s are the \textit{digits} of $\alpha$ in this number system, which we call the $\varpi$-\textit{system}. Any element of $\mathbb{Z}_{n!}$ can be written uniquely
\begin{displaymath}
\alpha = {\alpha_{n-2} \cdots \alpha_{1}\alpha_{0}}_{\varpi}.
\end{displaymath} 
Unless $\alpha_{n-2}=1$, the rightmost digits are set to 0, so that the sum always involves $n-1$ elements, indexed $0, \ldots , n-2$.

For example, in $\mathbb{Z}_{5!}$ the base is $\{\varpi_{5,0}=1,\varpi_{5,1}=5,\varpi_{5,2}=20,\varpi_{5,3}=60\}$. The element 84 writes
\begin{displaymath}
84 = 1 \times 60 + 1 \times 20 + 0 \times 5 + 4 \times 1=1104_{\varpi},
\end{displaymath}
and the element 35 writes
\begin{displaymath}
35 = 0 \times 60 + 1 \times 20 + 3 \times 5 + 0 \times 1=0130_{\varpi}.
\end{displaymath}
\begin{proof}
For simplicity, we denote $\varpi_{i}=\varpi_{n,i}$. For $n=2$, there is a single base element, $\varpi_{0}=1$, and the result clearly holds.  For $n \ge 3$, and $\alpha \in \mathbb{Z}_{n!}$, we set
\begin{displaymath}
\alpha^{(0)} = \alpha,
\end{displaymath}
\begin{displaymath}
\alpha_{i} = \alpha^{(i)} \bmod (n-i), \quad \alpha^{(i+1)}=\alpha^{(i)} \mydiv (n-i), \quad i=0, \ldots ,n-2,
\end{displaymath}
where $\mydiv$ denotes the integer division. These relations imply 
\begin{displaymath}
\alpha^{(i)} = (n-i)\alpha^{(i+1)} + \alpha_{i}, \quad i=0, \ldots ,n-2.
\end{displaymath}
We multiply by $\varpi_{i}$ on both sides. For $i=0, \ldots ,n-3$, we use the identity $\varpi_{i+1}=(n-i)\varpi_{i}$, and for $i=n-2$, we use the identity $2\varpi_{n-2}=0$ in $\mathbb{Z}_{2}$, to get 
\begin{displaymath}
\begin{array}{rcl}
\varpi_{i}\alpha^{(i)} - \varpi_{i+1}\alpha^{(i+1)} &=& \alpha_{i}\varpi_{i}, \qquad i=0, \ldots ,n-3,\\
 \varpi_{n-2}\alpha^{(n-2)} &=& \alpha_{n-2}\varpi_{n-2}.
\end{array}
\end{displaymath}
Adding these relations together, and accounting for telescoping cancellation on the left side,
\begin{displaymath}
\varpi_{0}\alpha^{(0)} = \alpha_{n-2}\varpi_{n-2} + \cdots + \alpha_{0}\varpi_{0}.
\end{displaymath}
We obtain the representation
\begin{displaymath}
\alpha = \alpha_{n-2}\varpi_{n-2} + \cdots + \alpha_{0}\varpi_{0}.
\end{displaymath}
By construction, $\alpha_{i} \in  \mathbb{Z}_{n-i}$ for $i=0, \ldots ,n-2$. The digits $\alpha_{i}$ are uniquely determined, so that the representation is unique.
\end{proof}

Arithmetics can be performed in the ring $(\mathbb{Z}_{n!},+,\times)$ endowed with the $\varpi$-system. The computation of the sum and product works in the usual way of positional number systems, using the ring structure of $\mathbb{Z}_{n-i}$ for the operations on the digits of the operands. There is no carry to propagate after the rightmost digit.

\begin{lemma}\label{the_recurrence_relation}
The base elements verify
\begin{equation}\label{recurrence_relation} 
\varpi_{n,i+k} = \varpi_{n-k,i}\varpi_{n,k},
\end{equation}
\begin{equation}\label{sum_k} 
\sum_{i=0}^{k-1}(n-i-1)\varpi_{n,i} = \varpi_{n,k}-1, \quad k \in \{1, \ldots  n-2\},
\end{equation}
\begin{equation}\label{sum_moins1} 
\sum_{i=0}^{n-2}(n-i-1)\varpi_{n,i} = -1.
\end{equation}
\end{lemma}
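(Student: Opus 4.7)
The plan is to prove the three identities independently, using only the product definition $\varpi_{n,i}=n(n-1)\cdots(n-i+1)$ together with elementary manipulations; the uniqueness result of Proposition~\ref{number_system} is not needed.

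For \eqref{recurrence_relation}, I would just expand both sides as products of consecutive descending integers. The left side is $n(n-1)\cdots(n-i-k+1)$, a product of $i+k$ factors. On the right, $\varpi_{n,k}=n(n-1)\cdots(n-k+1)$ contributes the first $k$ factors and $\varpi_{n-k,i}=(n-k)(n-k-1)\cdots(n-k-i+1)$ the remaining $i$ factors, reassembling exactly the same run of $i+k$ consecutive integers. This identity thus holds at the integer level, not merely modulo $n!$.

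For \eqref{sum_k}, the key observation is the telescoping identity $(n-i-1)\varpi_{n,i}=\varpi_{n,i+1}-\varpi_{n,i}$, which follows immediately from $\varpi_{n,i+1}=(n-i)\varpi_{n,i}$. Summing over $i=0,\ldots,k-1$ collapses the sum to $\varpi_{n,k}-\varpi_{n,0}=\varpi_{n,k}-1$. This works provided $k\le n-2$, so that the recurrence $\varpi_{n,i+1}=(n-i)\varpi_{n,i}$ is valid on the whole range of summation.

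For \eqref{sum_moins1}, I would apply \eqref{sum_k} with $k=n-2$ to rewrite the first $n-2$ terms as $\varpi_{n,n-2}-1$, then add the remaining $i=n-2$ contribution, which equals $1\cdot\varpi_{n,n-2}$. The total is $2\varpi_{n,n-2}-1$. The concluding step, and the only place where one has to reason in the ambient ring rather than with plain integers, is that $\varpi_{n,n-2}=n!/2$, so $2\varpi_{n,n-2}=n!\equiv 0$ in $\mathbb{Z}_{n!}$, leaving $-1$. I expect this last reduction modulo $n!$ to be the only subtle point; the first two identities are essentially direct.
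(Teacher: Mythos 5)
Your proof is correct, but for identities \eqref{sum_k} and \eqref{sum_moins1} it takes a genuinely different route from the paper. The paper proves these two by interpreting $\xi=\sum_{i=0}^{k-1}(n-i-1)\varpi_{n,i}$ as the element of $\mathbb{Z}_{n!}$ whose first $k$ digits are all maximal, and then computing $\xi+1$ by carry propagation in the $\varpi$-system: each digit $n-i-1$ rolls over to $0$ in $\mathbb{Z}_{n-i}$, the carry reaches digit $k$ (giving $\varpi_{n,k}$) when $k\le n-2$, and falls off the end (giving $0$, hence $\xi=-1$) when $k=n-1$. Your argument instead rests on the telescoping identity $(n-i-1)\varpi_{n,i}=\varpi_{n,i+1}-\varpi_{n,i}$, valid for $i\le n-3$, which collapses the sum in \eqref{sum_k} directly, and then handles \eqref{sum_moins1} by adding the last term and using $2\varpi_{n,n-2}=n!\equiv 0$ — which is exactly the relation $2\varpi_{n-2}=0$ that the paper itself invokes inside the proof of Proposition~\ref{number_system}. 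Your computation of the final term ($(n-(n-2)-1)\varpi_{n,n-2}=\varpi_{n,n-2}$ and $\varpi_{n,n-2}=n!/2$) is right. The trade-off: your approach is more elementary and self-contained, working at the level of integers until a single reduction modulo $n!$ at the end, and in particular it does not presuppose that positional carry arithmetic in the $\varpi$-system behaves as expected; the paper's approach is shorter once that arithmetic is taken for granted and makes the ``all digits maximal'' interpretation of these sums explicit, which is the intuition reused later (e.g., in Lemma~\ref{last_element_k_orbit} and Proposition~\ref{E_palindrome}). Your treatment of \eqref{recurrence_relation} by expanding both sides as runs of consecutive descending factors matches what the paper leaves as ``straightforward.''
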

\begin{proof}
The verification of the first relation is straightforward. For the two other relations, let 
\begin{displaymath}
\xi = \sum_{i=0}^{k-1}\alpha_{i}\varpi_{n,i}, \quad \alpha_{i}=n-i-1 \in \mathbb{Z}_{n-i}.
\end{displaymath}
In $\mathbb{Z}_{n-i}$, $\alpha_{i}+1=0$. Therefore, when computing $\xi+1$, the carry propagates from $\alpha_{0}$ up to $\alpha_{k-1}$, and the $\alpha_{i}$'s are set to 0. If $k \le n-2$, the digit $\alpha_{k}=0$ gets the carry and is replaced by 1. In this case, $\xi+1=\varpi_{n,k}$. If $k=n-1$, there is no carry to propagate after the rightmost digit, and all digits of $\xi+1$ are set to 0. In this case, $\xi+1=0$.
\end{proof}

\begin{corollary}\label{alpha_plus_alphaprime}
For $\alpha,\alpha '\in \mathbb{Z}_{n!}$, with digits $\alpha_{i},\alpha '_{i}\in \mathbb{Z}_{n-i}$,
\begin{displaymath}
\alpha + \alpha ' = -1 \iff \alpha_{i} + \alpha '_{i}= -1, \quad i=0, \ldots, n-2.
\end{displaymath} 
\end{corollary}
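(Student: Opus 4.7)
The plan is to leverage two tools already in place: equation~(\ref{sum_moins1}) of Lemma~\ref{the_recurrence_relation}, which exhibits $-1$ as having $\varpi$-digits $(-1)_i = n-i-1$, all maximal; and the uniqueness of the $\varpi$-representation from Proposition~\ref{number_system}. Both directions will fall out of these.

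For the $(\Leftarrow)$ direction, I would first observe that the relation $\alpha_i + \alpha'_i \equiv -1 \pmod{n-i}$ with $\alpha_i, \alpha'_i \in \{0, \ldots, n-i-1\}$ actually forces their integer sum to equal exactly $n-i-1$: the only other residue-matching candidate, $2(n-i)-1$, exceeds the maximum possible integer sum $2(n-i-1)$. Adding inside $\mathbb{Z}_{n!}$ then yields
\begin{displaymath}
\alpha + \alpha' = \sum_{i=0}^{n-2}(\alpha_i + \alpha'_i)\varpi_{n,i} = \sum_{i=0}^{n-2}(n-i-1)\varpi_{n,i} = -1,
\end{displaymath}
by equation~(\ref{sum_moins1}).

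For the $(\Rightarrow)$ direction, I would compute $\alpha' = -1 - \alpha$ digit by digit. Since every $\varpi$-digit of $-1$ is maximal and $\alpha_i \le n-i-1$, the position-wise difference $(n-i-1) - \alpha_i$ already lies in $\{0, \ldots, n-i-1\}$, so no borrow is ever triggered. The string with these digits is therefore a valid $\varpi$-representation of $-1-\alpha$, and by the uniqueness part of Proposition~\ref{number_system} it is \emph{the} representation. Hence $\alpha'_i = (n-i-1) - \alpha_i$, giving $\alpha_i + \alpha'_i = -1$ in $\mathbb{Z}_{n-i}$.

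The only subtle point I expect is justifying the no-borrow argument in $(\Rightarrow)$: one must verify that subtraction in this mixed-radix system genuinely proceeds position-by-position with borrows arising only when the minuend digit is smaller than the subtrahend digit. If that step feels too informal I would fall back on a self-contained carry-induction mirroring the proof of Lemma~\ref{the_recurrence_relation}: write $\alpha_i + \alpha'_i + c_{i-1} = c_i(n-i) + \gamma_i$ with $\gamma_i$ the $i$-th digit of $\alpha + \alpha'$, note that $\alpha + \alpha' = -1$ forces $\gamma_i = n-i-1$ by uniqueness, and prove by induction that $c_i = 0$ for every $i$, since $c_i(n-i) + (n-i-1) \le 2(n-i-1)$ forces $c_i(n-i) \le n-i-1 < n-i$. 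This recovers $\alpha_i + \alpha'_i = n-i-1$ directly.
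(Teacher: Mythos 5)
Your proof is correct and follows essentially the same route as the paper: both rest on identity (\ref{sum_moins1}), which exhibits $-1$ as the element with all digits maximal, combined with the uniqueness of the $\varpi$-representation. You are in fact somewhat more careful than the paper's one-line argument, which silently treats $\sum_i(\alpha_i+\alpha'_i)\varpi_{n,i}$ as a canonical representation; your observation that the digitwise sums must equal exactly $n-i-1$, and your no-carry induction for the forward direction, supply the details the paper leaves implicit.
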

\begin{proof}
We write 
\begin{displaymath}
\alpha + \alpha ' = \sum_{i=0}^{n-2}(\alpha_{i}+\alpha '_{i})\varpi_{n,i}.
\end{displaymath}
In $\mathbb{Z}_{n-i}$, $\alpha_{i}+\alpha '_{i}=-1$ if and only if $\alpha_{i}+\alpha '_{i}=n-i-1$. By uniqueness of the decomposition in the $\varpi$-system, the result follows from  
(\ref{sum_moins1}).
\end{proof}

It can be noted that (\ref{sum_moins1}) leads in $\mathbb{N}$ to the identity 
\begin{displaymath}
\sum_{i=0}^{n-2}(n-i-1)\frac{n!}{(n-i)!} = n!-1,
\end{displaymath}
which is related to the identity
\begin{equation}\label{factorial_system}
\sum_{i=1}^{n-1}i \cdot i! = n!-1,
\end{equation}
associated with the factorial number system. Identities (\ref{sum_k}) and (\ref{sum_moins1}) are instances of general identities of mixed radix number systems.

\section{Code}\label{the_code}
The set of permutations on $n$ symbols $x_{1},\ldots,x_{n}$ is denoted $\mathcal{S}_n$. From a permutation $q$ on the $n-1$ symbols $x_{1},\ldots,x_{n-1}$, $n$ permutations on $n$ symbols are generated by inserting $x_{n}$ to the right and cyclically permuting the symbols. The insertion of $x_{n}$ to the right defines an injection
\begin{displaymath}
\begin{array}{ccc}
\mathcal{S}_{n-1} &\overset{\iota}{\longrightarrow}& \mathcal{S}_{n}\\
q = (a_{1} \cdots a_{n-1}) &\longmapsto& (a_{1} \cdots a_{n-1}x_{n}) = \tilde{q}.
\end{array}
\end{displaymath}
We define the \textit{cyclic shift} $S: \mathcal{S}_{n-1} \rightarrow \mathcal{S}_{n}$ by $S=C \circ \iota$, where $C: \mathcal{S}_{n} \rightarrow \mathcal{S}_{n}$ is the circular permutation, so that
\begin{eqnarray}
\nonumber S^{0}q   &= &(a_{1}a_{2}\cdots a_{n-1}x_{n})=C^{0}\tilde{q} = \tilde{q},\\
\nonumber S^{1}q   &= &(a_{2}\cdots a_{n-1}x_{n}a_{1})=C^{1}\tilde{q},\\
\nonumber &\vdots\\
\nonumber S^{n-1}q &= &(x_{n}a_{1}a_{2}\cdots a_{n-1})=C^{n-1}\tilde{q}.
\end{eqnarray}
The set $\mathcal{O}(q)=\{S^{0}q, \ldots,S^{n-1}q\}$ is the \textit{orbit} of $q$. As $S^{i}=S^{j}$ is equivalent to $i = j \bmod n$, the exponents of the cyclic shift are elements of $\mathbb{Z}_{n}$.

\begin{lemma}\label{orbits}
The set of permutations $\mathcal{S}_{n}$ is the disjoint union of the orbits $\mathcal{O}(q)$ for $q \in \mathcal{S}_{n-1}$.
\end{lemma}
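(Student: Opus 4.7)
The plan is to establish three facts: each orbit has exactly $n$ elements, every permutation in $\mathcal{S}_n$ belongs to some orbit, and distinct orbits are disjoint. A counting check at the end then confirms everything is consistent.

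First I would verify that $|\mathcal{O}(q)| = n$ for every $q \in \mathcal{S}_{n-1}$. Writing $S^i q = C^i \tilde{q}$ with $\tilde{q} = (a_1 \cdots a_{n-1} x_n)$, the key observation is that in $C^i \tilde{q}$ the symbol $x_n$ occupies a position that depends on $i$: specifically, $x_n$ sits at position $n-i \pmod n$. Hence $C^i \tilde{q} = C^j \tilde{q}$ forces $i \equiv j \pmod n$, so the $n$ elements $S^0 q, \ldots, S^{n-1} q$ are pairwise distinct.

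Next I would show that every $p \in \mathcal{S}_n$ lies in some orbit. Let $k$ be the position of $x_n$ in $p$; then $C^{n-k} p$ ends in $x_n$, so $C^{n-k} p = \tilde{q}$ for a unique $q \in \mathcal{S}_{n-1}$, and thus $p = C^k \tilde{q} = S^k q \in \mathcal{O}(q)$. This same argument gives disjointness: each orbit contains exactly one permutation whose rightmost symbol is $x_n$, namely $\tilde q$. So if $\mathcal{O}(q) \cap \mathcal{O}(q') \neq \emptyset$, then there exist $i, j$ with $C^i \tilde q = C^j \tilde q'$; comparing the position of $x_n$ forces $i = j$, hence $\tilde q = \tilde q'$ and $q = q'$.

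Finally, a cardinality check confirms the decomposition: there are $(n-1)!$ orbits, each of size $n$, giving $(n-1)!\cdot n = n! = |\mathcal{S}_n|$. The main conceptual point — and really the only subtle step — is the uniqueness of the ``canonical representative'' $\tilde{q}$ in each orbit, i.e., the observation that cyclic shift moves $x_n$ through all $n$ positions exactly once; everything else is bookkeeping.
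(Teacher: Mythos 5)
Your proof is correct, and its core observation --- that each orbit contains exactly one permutation ending in $x_n$, because the cyclic shift moves $x_n$ through all $n$ positions exactly once --- is the same observation the paper uses for disjointness (the paper phrases it as ``the only possibility is $k=0$'' when $S^{k}q=\tilde{r}$). Where you genuinely diverge is in how coverage is established: the paper gets spanning purely by counting ($(n-1)!$ disjoint orbits of size $n$ exhaust the $n!$ permutations), whereas you prove it directly, exhibiting for each $p$ the shift exponent determined by the position of $x_n$ and hence the unique $q$ with $p\in\mathcal{O}(q)$. Your route is slightly more informative: it constructs the orbit representative explicitly, which is exactly what the paper's $\textsc{Pos}$/$\textsc{Perm2Rank}$ routines and the later identity $\mypos_{n}(x_{n},p^{(n)})=\alpha_{0}$ rely on, and it turns the final cardinality count into a mere consistency check rather than a needed step; the paper's version is shorter but leans on the counting argument (and on the size-$n$ claim stated just before the lemma) to conclude. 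One cosmetic remark: your two paragraphs silently use different position conventions --- ``$x_n$ sits at position $n-i$'' reads positions from the left, while ``$C^{n-k}p$ ends in $x_n$ when $x_n$ has position $k$'' matches the paper's right-to-left, $0$-indexed convention of Section 5 --- and under either fixed convention the argument goes through, but you should pick one and state it.
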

\begin{proof} 
If $q,r \in \mathcal{S}_{n-1}$, their orbits are disjoint subsets of $\mathcal{S}_{n}$. Indeed, if $S^{i}q=S^{j}r$ there exists $k \in \mathbb{Z}_{n}$ such that $S^{k}q=S^{0}r=\tilde{r}$. The only possibility is $k=0$, implying $S^{0}q=\tilde{q}=\tilde{r}$, and $q=r$. There are $(n-1)!$ disjoint orbits, each of size $n$, so that they span $\mathcal{S}_{n}$.
\end{proof}

According to Lemma \ref{orbits}, the set $\mathcal{S}_{n}$ can be generated by cyclic shift. The generation by cyclic shift defines an order on the set of permutations, $\mathcal{S}_{n}= \{p_{0}, \ldots ,p_{n!-1}\}$, indexed from 0 (a cyclic order in fact). For this order, the rank $\alpha$ of a permutation $p_{\alpha} \in \mathcal{S}_{n}$ is an element of $\mathbb{Z}_{n!}$. 

The generation by cyclic shift of $p \in \mathcal{S}_{n}$ from $(1) \in \mathcal{S}_{1}$ can be schematized:
\begin{equation}\label{schema}
\left\{
\begin{array}{c}
p^{(1)}=(1) \overset{\alpha_{n-2}}{\longrightarrow} p^{(2)} \longrightarrow \cdots  \overset{\alpha_{2}}{\longrightarrow} p^{(n-2)}\overset{\alpha_{1}}{\longrightarrow} p^{(n-1)}\overset{\alpha_{0}}{\longrightarrow}  p^{(n)}=p,\\
\\
p^{(n-i)} = S_{n-i}^{\alpha_{i}}p^{(n-i-1)},
\end{array}
\right\}
\end{equation}
where $p^{(n-i)} \in \mathcal{S}_{n-i}$ is generated from $p^{(n-i-1)}\in \mathcal{S}_{n-i-1}$ by the cyclic shift 
\begin{displaymath}
S_{n-i}:\mathcal{S}_{n-i-1} \longrightarrow \mathcal{S}_{n-i}
\end{displaymath}
with the exponent $\alpha_{i} \in \mathbb{Z}_{n-i}$. 

\begin{definition}\label{code}
The sequence of exponents associated with successive cyclic shifts leading from $(1) \in \mathcal{S}_{1}$ to $p \in \mathcal{S}_{n}$ is the code of $p$ in the $\varpi$-system:
\begin{displaymath}
\alpha = {\alpha_{n-2} \cdots \alpha_{0}}_{\varpi} \in \mathbb{Z}_{n!}.
\end{displaymath}
\end{definition}

\begin{theorem}\label{permutation_code}
The rank of a permutation on $n$ symbols generated by cyclic shift is given by its code. A permutation on $n$ symbols generated by cyclic shift is determined by writing its rank in the $\varpi$-system.
\end{theorem}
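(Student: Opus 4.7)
My plan is induction on $n$, driven by the orbit decomposition of $\mathcal{S}_{n}$ (Lemma~\ref{orbits}) and the base identity $\varpi_{n,j}=n\,\varpi_{n-1,j-1}$ for $j\ge 1$, which is the $k=1$ instance of the recurrence (\ref{recurrence_relation}) in Lemma~\ref{the_recurrence_relation}. The base case $n=2$ is immediate: the generation produces $(x_{1}x_{2})=S_{2}^{0}(x_{1})$ and $(x_{2}x_{1})=S_{2}^{1}(x_{1})$ at ranks $0$ and $1$, matching the single-digit codes $0_{\varpi}$ and $1_{\varpi}$.

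For the induction step, fix $p\in\mathcal{S}_{n}$ with code $\alpha_{n-2}\cdots\alpha_{0}$. The schema (\ref{schema}) presents $p$ as $S_{n}^{\alpha_{0}}q$, where $q=p^{(n-1)}\in\mathcal{S}_{n-1}$ is built from the same earlier digits and therefore has $\varpi$-code $\alpha_{n-2}\cdots\alpha_{1}$ in $\mathbb{Z}_{(n-1)!}$. Lemma~\ref{orbits} shows that generating $\mathcal{S}_{n}$ from $\mathcal{S}_{n-1}$ by cyclic shift produces precisely the disjoint orbits $\mathcal{O}(q_{0}),\mathcal{O}(q_{1}),\ldots,\mathcal{O}(q_{(n-1)!-1})$ listed in the order of $\mathcal{S}_{n-1}$, and within each $\mathcal{O}(q_{\alpha'})$ the enumeration is $S_{n}^{0}q_{\alpha'},S_{n}^{1}q_{\alpha'},\ldots,S_{n}^{n-1}q_{\alpha'}$. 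Hence, if $q$ has rank $\alpha'$ in $\mathcal{S}_{n-1}$, the permutation $p=S_{n}^{\alpha_{0}}q$ has rank $n\alpha'+\alpha_{0}$ in $\mathcal{S}_{n}$.

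The algebraic punchline uses the induction hypothesis and Lemma~\ref{the_recurrence_relation}. By induction, $\alpha'=\sum_{i=0}^{n-3}\alpha_{i+1}\varpi_{n-1,i}$. Applying $\varpi_{n,j}=n\,\varpi_{n-1,j-1}$ yields $n\alpha'=\sum_{j=1}^{n-2}\alpha_{j}\varpi_{n,j}$, so the rank of $p$ equals $n\alpha'+\alpha_{0}=\sum_{i=0}^{n-2}\alpha_{i}\varpi_{n,i}$, which is exactly the integer value of the code. For the converse, given any $\alpha\in\mathbb{Z}_{n!}$, Proposition~\ref{number_system} furnishes unique digits $\alpha_{i}\in\mathbb{Z}_{n-i}$, and the schema (\ref{schema}) with exponents $\alpha_{0},\alpha_{1},\ldots,\alpha_{n-2}$ produces the unique $p\in\mathcal{S}_{n}$ of rank $\alpha$.

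The main obstacle lies in the second paragraph: articulating carefully that the generation order on $\mathcal{S}_{n}$ is the concatenation, in the order inherited from $\mathcal{S}_{n-1}$, of the orbits of the elements of $\mathcal{S}_{n-1}$, each listed in order of its cyclic-shift exponent $0,1,\ldots,n-1$. Once this orbit-block structure of the enumeration is pinned down, the identification of ranks with $\varpi$-codes is a one-line invocation of the recurrence in Lemma~\ref{the_recurrence_relation}.
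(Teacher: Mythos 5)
Your proposal is correct and follows essentially the same route as the paper: induction on $n$, the observation that $p=S_{n}^{\alpha_{0}}q$ has rank $n\beta+\alpha_{0}$ where $\beta$ is the rank of $q$ in $\mathcal{S}_{n-1}$, and the $k=1$ instance of the recurrence (\ref{recurrence_relation}) to convert $\beta\varpi_{n,1}$ into the shifted digits of $\alpha$. Your extra care in justifying the orbit-block structure of the enumeration only makes explicit what the paper states in one line.
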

\noindent For example, the permutation $p_{84}=(51324) \in \mathcal{S}_{5}$ is generated:
\begin{displaymath}
(1) \xrightarrow{\alpha_{3}=1} (21) \xrightarrow{\alpha_{2}=1} (132) \xrightarrow{\alpha_{1}=0} (1324) \xrightarrow{\alpha_{0}=4} (51324).
\end{displaymath}
Its code is $1104_{\varpi}=84$.
\begin{proof}
We use induction on $n$. For $n=2$, in $\mathcal{S}_{2}=\{(12),(21)\}$, the rank of the permutation $(12)$ is $0 = 0_{\varpi}$, and the rank of the permutation $(21)$ is $1 = 1_{\varpi}$. For $n>2$, let $p=p_{\alpha} \in \mathcal{S}_{n}$ of rank $\alpha$, generated by cyclic shift from $q=q_{\beta} \in \mathcal{S}_{n-1}$ of rank $\beta$. Then $p = S_{n}^{\alpha_{0}}q$ for some $\alpha_{0} \in \mathbb{Z}_{n}$, $\alpha_{0}$ being the rank of $p$ within the orbit of $q$. As the orbits contain $n$ elements and as $\beta$ is the rank of $q$ in $\mathcal{S}_{n-1}$, the rank of $p$ in $\mathcal{S}_{n}$ is 
\begin{displaymath}
\alpha = \beta n + \alpha_{0} = \beta\varpi_{n,1} + \alpha_{0}\varpi_{n,0}.
\end{displaymath}
By induction hypothesis, the rank $\beta$ of $q$ is given by the code
\begin{displaymath}
\beta = \sum_{i=0}^{n-3} \beta_{i}\varpi_{n-1,i}, \quad \beta_{i} \in \mathbb{Z}_{n-1-i}.
\end{displaymath}
For $k=1$, Eq. (\ref{recurrence_relation}) gives
\begin{displaymath}
\varpi_{n,i+1} = \varpi_{n-1,i}\varpi_{n,1},
\end{displaymath}
so that
\begin{displaymath}
\beta\varpi_{n,1} = \sum_{i=0}^{n-3} \beta_{i}\varpi_{n-1,i}\varpi_{n,1} = \sum_{i=0}^{n-3} \beta_{i}\varpi_{n,i+1} = \sum_{i=1}^{n-2} \beta_{i-1}\varpi_{n,i}.
\end{displaymath} 
Let $\alpha_{i}=\beta_{i-1}$ for $i=1, \ldots ,n-2$. As $\beta_{i} \in \mathbb{Z}_{n-1-i}$, $\alpha_{i} \in \mathbb{Z}_{n-i}$. We obtain that
\begin{displaymath}
\alpha = \beta\varpi_{n,1} + \alpha_{0}\varpi_{n,0} = \sum_{i=0}^{n-2} \alpha_{i}\varpi_{n,i}, \quad \alpha_{i} \in \mathbb{Z}_{n-i},
\end{displaymath} 
is the code of $p_{\alpha}$. Conversely, let $p_{\alpha} \in \mathcal{S}_{n}$. We write the rank $\alpha$ in the $\varpi$-system, $\alpha = {\alpha_{n-2} \cdots \alpha_{0}}_{\varpi}$, and  use scheme (\ref{schema}) -- from right to left -- with the exponents $\alpha_{0}, \ldots ,\alpha_{n-2}$ to determine $p_{\alpha}$.
\end{proof}

We end the section by a package of algorithms performing the correspondance rank $\leftrightarrow$ permutation of Theorem \ref{permutation_code}. Permutations are represented by strings indexed from 1. Algorithm C in Knuth \cite{knuth_2005} generates $\mathcal{S}_{n}$ by cyclic shift in a simple version of the scheme described in this section.

\noindent \hrulefill
\begin{algorithmic}
\footnotesize
\STATE $\textsc{Int2Num}(n,\alpha)$ \{ conversion from integer to $\varpi$-system \}
\FOR{$i \gets 0$ to $n-2$}
\STATE $A[i] \gets \alpha \bmod (n-i)$
\STATE $\alpha \gets \alpha \mydiv (n-i)$
\ENDFOR
\RETURN $A$
\end{algorithmic}
\noindent \hrulefill
\begin{algorithmic}
\footnotesize
\STATE $\textsc{Num2Int}(n,A)$ \{ conversion from $\varpi$-system to integer \}
\STATE $\alpha \gets 0$
\STATE $base \gets 1$
\FOR{$i \gets 0$ to $n-2$}
\STATE $\alpha \gets \alpha + A[i]*base$
\STATE $base \gets base*(n-i)$
\ENDFOR
\RETURN $\alpha$
\end{algorithmic}
\noindent \hrulefill
\begin{algorithmic}
\footnotesize
\STATE $\textsc{Circ}(m,k,p)$ \{ Circular permutation of exponent $k$ on $m$ symbols \}
\FOR{$i \gets 1$ to $k$}
\STATE $c \gets p[1]$
\FOR{$j \gets 2$ to $m$}
\STATE $p[j-1] \gets p[j]$
\ENDFOR
\STATE $p[m] \gets c$
\ENDFOR
\RETURN $p$
\end{algorithmic}
\noindent \hrulefill
\begin{algorithmic}
\footnotesize
\STATE $\textsc{Pos}(m,p)$ \{ Position of $x_{m}$ in a permutation $p$ on $m$ symbols \}
\FOR{$j \gets 1$ to $m$}
\IF{$p[j] = x_{m}$}
\RETURN $m-j$
\ENDIF
\ENDFOR
\end{algorithmic}
\noindent \hrulefill
\begin{algorithmic}
\footnotesize
\STATE $\textsc{Perm2Rank}(n,p)$ \{ Find the rank of a given permutation $p$ \}
\FOR{$i \gets 0$ to $n-2$}
\STATE $m \gets n-i$
\STATE $A[i] \gets \textsc{Pos}(m,p)$
\STATE $p \gets \textsc{Circ}(m,m-A[i],p)$
\ENDFOR
\STATE $\alpha \gets \textsc{Num2Int}(n,A)$
\RETURN $A$
\end{algorithmic}
\noindent \hrulefill
\begin{algorithmic}
\footnotesize
\STATE $\textsc{Rank2Perm}(n,\alpha)$ \{ Determine a permutation given its rank $\alpha$ \}
\STATE $A \gets \textsc{Int2Num}(n,\alpha)$
\STATE $p \gets x_{1}$
\FOR{$i \gets n-2$ downto $0$}
\STATE $m \gets n-i$
\STATE $p \gets \textsc{Circ}(m,A[i],p+x_{m})$
\ENDFOR
\RETURN $p$
\end{algorithmic}
\noindent \hrulefill
\begin{algorithmic}
\footnotesize
\STATE $\textsc{SetPerm}(n)$ \{ Generation of the permutations on $n$ symbols \}
\FOR{$\alpha \gets 0$ to $n!-1$}
\STATE $p \gets \textsc{Rank2Perm}(n,\alpha)$
\ENDFOR\\
\end{algorithmic}
\noindent \hrulefill\\

In the sequel, we assume that the set of permutations $\mathcal{S}_{n}$ is ordered according to generation by cyclic shift.

\begin{table}[ht]
\footnotesize
\begin{center}
\begin{tabular}{| c c c c c |}
\hline
$\alpha$ &$p_{\alpha}$ &$\alpha_{2}$ &$\alpha_{1}$ &$\alpha_{0}$\\
\hline
0	&1234	&0	&0 &0\\
1	&2341	&0	&0	&1\\
2	&3412	&0	&0	&2\\
3	&4123	&0	&0	&3\\
4	&2314	&0	&1	&0\\
5	&3142	&0	&1	&1\\
6	&1423	&0	&1	&2\\
7	&4231	&0	&1	&3\\
8	&3124	&0	&2	&0\\
9	&1243	&0	&2	&1\\
10	&2431	&0	&2	&2\\
11	&4312	&0	&2	&3\\
12	&2134	&1	&0	&0\\
13	&1342	&1	&0	&1\\
14	&3421	&1	&0	&2\\
15	&4213	&1	&0	&3\\
16	&1324	&1	&1	&0\\
17	&3241	&1	&1	&1\\
18	&2413	&1	&1	&2\\
19	&4132	&1	&1	&3\\
20	&3214	&1	&2	&0\\
21	&2143	&1	&2	&1\\
22	&1432	&1	&2	&2\\
23	&4321	&1	&2	&3\\
\hline
\end{tabular}
\end{center}
\caption{The codes of the permutations of $\{1,2,3,4\}$ generated by cyclic shift.}
\label{table_S4}
\end{table}

\section{$k$-orbits}\label{the_k-orbits}

In this section, structural properties of $\mathcal{S}_{n}$ are described using the $\varpi$-system.

\begin{proposition}\label{beta_gamma_decomposition}
Let $k \in \{0, \ldots ,n-2\}$ and $p_{\alpha} \in \mathcal{S}_{n}$ with code $\alpha \in \mathbb{Z}_{n!}$. There exists a permutation $q_{\beta} \in S_{n-k}$ with code $\beta \in \mathbb{Z}_{(n-k)!}$ such that
\begin{equation}\label{beta_gamma}
\alpha = \beta\varpi_{n,k} + \gamma, \quad \gamma \in \{0, \ldots ,\varpi_{n,k} - 1\}.
\end{equation}
The code $\beta$ is made of the $n-k-1$ leftmost digits of $\alpha$, and $\gamma$ is made of the $k$ rightmost digits of $\alpha$. 
\end{proposition}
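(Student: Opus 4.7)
The plan is to split the code $\alpha$ of $p_\alpha$ in the $\varpi$-system at the index $k$, and identify the two resulting pieces with $\beta\varpi_{n,k}$ and $\gamma$ respectively.

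First I would write
\begin{displaymath}
\alpha = \sum_{i=0}^{n-2}\alpha_{i}\varpi_{n,i} = \underbrace{\sum_{i=0}^{k-1}\alpha_{i}\varpi_{n,i}}_{=:\gamma} + \underbrace{\sum_{i=k}^{n-2}\alpha_{i}\varpi_{n,i}}_{\text{high part}}.
\end{displaymath}
For the low part $\gamma$, each digit satisfies $\alpha_{i} \le n-i-1$, so identity (\ref{sum_k}) of Lemma \ref{the_recurrence_relation} immediately yields $\gamma \le \varpi_{n,k}-1$, giving the required range for $\gamma$.

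Next I would handle the high part by invoking the recurrence (\ref{recurrence_relation}) with index shift $k$: for $i \ge k$, $\varpi_{n,i} = \varpi_{n-k,i-k}\varpi_{n,k}$. Substituting and reindexing $j = i-k$,
\begin{displaymath}
\sum_{i=k}^{n-2}\alpha_{i}\varpi_{n,i} = \varpi_{n,k}\sum_{j=0}^{n-k-2}\alpha_{j+k}\varpi_{n-k,j} =: \beta\varpi_{n,k}.
\end{displaymath}
The digit checks are routine but necessary: the coefficient of $\varpi_{n-k,j}$ in the expression defining $\beta$ is $\alpha_{j+k} \in \mathbb{Z}_{n-(j+k)} = \mathbb{Z}_{(n-k)-j}$, which is exactly the admissible range for the $j$-th digit in the $\varpi$-expansion of an element of $\mathbb{Z}_{(n-k)!}$. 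By the uniqueness part of Proposition \ref{number_system}, applied to $n-k$ in place of $n$, this exhibits $\beta$ as a well-defined element of $\mathbb{Z}_{(n-k)!}$ whose digits are precisely the $n-k-1$ leftmost digits $\alpha_{k},\ldots,\alpha_{n-2}$ of $\alpha$. The two displays combine to give $\alpha = \beta\varpi_{n,k} + \gamma$.

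Finally, to produce the permutation $q_{\beta}$, I would invoke Theorem \ref{permutation_code} applied to $\mathbb{Z}_{(n-k)!}$: the integer $\beta$, written in the $\varpi$-system, is the code of a unique permutation $q_{\beta}\in\mathcal{S}_{n-k}$. (Consistently with scheme (\ref{schema}), this is precisely the intermediate permutation $p^{(n-k)}$ produced when $p_{\alpha}$ is generated from $(1)$ by cyclic shift.) There is no genuine obstacle here; the only subtlety is ensuring that the digit ranges of the truncated sum coincide with those of a bona fide $\varpi$-expansion in $\mathbb{Z}_{(n-k)!}$, which is where the identity $\varpi_{n,i+k}=\varpi_{n-k,i}\varpi_{n,k}$ does the essential work.
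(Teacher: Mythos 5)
Your proposal is correct and follows essentially the same route as the paper: split the $\varpi$-expansion at index $k$, bound the low part $\gamma$ by identity (\ref{sum_k}), factor the high part through $\varpi_{n,i+k}=\varpi_{n-k,i}\varpi_{n,k}$, and identify the shifted digits as the code $\beta$ of a permutation in $\mathcal{S}_{n-k}$. The only cosmetic difference is that you make the appeal to Theorem \ref{permutation_code} explicit where the paper simply asserts that $\beta$ is the code of some $q_{\beta}\in\mathcal{S}_{n-k}$.
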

\begin{proof}
We have the decomposition
\begin{displaymath}
\alpha = {\alpha_{n-2} \cdots \alpha_{0}}_{\varpi} = {\alpha_{n-2} \cdots \alpha_{k}0 \cdots 0}_{\varpi} + {0 \cdots 0 \alpha_{k-1} \cdots \alpha_{0}}_{\varpi} = \tilde{\alpha} + \gamma.
\end{displaymath}
Let $\beta_{i}=\alpha_{i+k}$ for $i=0, \ldots ,n-k-2$, so that the $\beta$'s are the $n-k-1$ leftmost digits of $\alpha$. As $\alpha_{i} \in \mathbb{Z}_{n-i}$, $\beta_{i}=\alpha_{i+k} \in \mathbb{Z}_{n-k-i}$. Hence 
\begin{displaymath}
\beta = \sum_{i=0}^{n-k-2} \beta_{i}\varpi_{n-k,i}, \quad \beta_{i} \in \mathbb{Z}_{n-k-i},
\end{displaymath} 
is an element of $\mathbb{Z}_{(n-k)!}$ which is the code of a permutation $q_{\beta} \in \mathcal{S}_{n-k}$. 
Using relation (\ref{recurrence_relation}), we obtain
\begin{displaymath}
\tilde{\alpha} = \sum_{i=k}^{n-2} \alpha_{i}\varpi_{n,i} = \sum_{i=0}^{n-k-2} \alpha_{i+k}\varpi_{n,i+k} = \sum_{i=0}^{n-k-2} \alpha_{i+k}\varpi_{n-k,i}\varpi_{n,k} = \left( \sum_{i=0}^{n-k-2} \beta_{i}\varpi_{n-k,i}\right)\varpi_{n,k}.
\end{displaymath} 
The term
\begin{displaymath}
\gamma = \sum_{i=0}^{k-1}\alpha_{i}\varpi_{n,i}
\end{displaymath}
is made of the $k$ rightmost digits of $\alpha$. It is an element of $\mathbb{Z}_{n-k+1}\times \cdots \times \mathbb{Z}_{n}$ ranging from 0 to $\sum_{i=0}^{k-1}(n-i-1)\varpi_{n,i}$, which equals $\varpi_{n,k}-1$ by (\ref{sum_k}). We obtain
\begin{displaymath}
\alpha = \tilde{\alpha} + \gamma = \beta\varpi_{n,k} + \gamma.
\end{displaymath} 
\end{proof}

\begin{definition}\label{def_k_orbits}
For $k \in \{0, \ldots ,n-2\}$, and $q_{\beta} \in \mathcal{S}_{n-k}$, the $k$-\textit{orbit} of $q_{\beta}$ in $\mathcal{S}_{n}$ is the subset
\begin{displaymath}
\mathcal{O}_{n,k}(q_{\beta})=\{p_{\alpha} \in S_{n}; \quad \alpha = \beta\varpi_{n,k} + \gamma, \quad \gamma=0, \dots ,\varpi_{n,k}-1\}.
\end{displaymath} 
\end{definition}

For $k=0$, the 0-orbit of $q \in \mathcal{S}_{n}$ is $\{q\}$. Indeed, for $k=0$, $\varpi_{n,0}=1$, $\gamma=0$, and $q=p_{\alpha}$. For $k \ge 1$, a $k$-orbit $\mathcal{O}_{n,k}(q)$ can be described as the subset of $\mathcal{S}_{n}$ generated from $q \in \mathcal{S}_{n-k}$ by $k$ successive cyclic shifts. Indeed, by Proposition \ref{beta_gamma_decomposition}, the code of $p_{\alpha} \in \mathcal{O}_{n,k}(q_{\beta})$ is obtained by appending ${\alpha_{k-1} \cdots \alpha_{0}}$ to the code ${\beta_{n-k-2} \cdots \beta_{0}}_{\varpi}$ of $q_{\beta}$. By scheme (\ref{schema}), the digits $\alpha_{k-1}, \ldots, \alpha_{0}$ describe the generation of $p_{\alpha}$ from $q_{\beta}$. In particular, for $k=1$, the 1-orbit $\mathcal{O}_{n,1}(q)$ of $q \in \mathcal{S}_{n-1}$ is the orbit $\mathcal{O}(q)$. We may further define the $(n-1)$-orbit $\mathcal{O}_{n,n-1}(q)$ as the whole set $\mathcal{S}_{n}$, with $q=(1) \in \mathcal{S}_{1}$.

We have the following generalization of Lemma \ref{orbits}:

\begin{proposition}\label{k-orbits}
For $k \in \{0, \ldots ,n-2\}$, the set of permutations $\mathcal{S}_{n}$ is the disjoint union of the $k$-orbits $\mathcal{O}_{n,k}(q)$ for $q \in \mathcal{S}_{n-k}$.
\end{proposition}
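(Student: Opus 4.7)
The plan is to derive Proposition \ref{k-orbits} as a direct consequence of Proposition \ref{beta_gamma_decomposition}, treating the decomposition $\alpha = \beta\varpi_{n,k}+\gamma$ as the mechanism that assigns each permutation to exactly one $k$-orbit. For $k=0$ the statement is trivial since each $0$-orbit is a singleton and they exhaust $\mathcal{S}_n$, so I will focus on $k \ge 1$.

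First I would establish coverage. Given any $p_\alpha \in \mathcal{S}_n$, Proposition \ref{beta_gamma_decomposition} produces a unique pair $(\beta,\gamma)$ with $\beta \in \mathbb{Z}_{(n-k)!}$ being the code of some $q_\beta \in \mathcal{S}_{n-k}$ and $\gamma \in \{0,\ldots,\varpi_{n,k}-1\}$ such that $\alpha = \beta\varpi_{n,k}+\gamma$. By Definition \ref{def_k_orbits} this says precisely $p_\alpha \in \mathcal{O}_{n,k}(q_\beta)$, so every element of $\mathcal{S}_n$ lies in at least one $k$-orbit indexed by a permutation of $\mathcal{S}_{n-k}$.

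Next I would prove disjointness. Suppose $p_\alpha \in \mathcal{O}_{n,k}(q_\beta) \cap \mathcal{O}_{n,k}(q_{\beta'})$ for two permutations $q_\beta, q_{\beta'} \in \mathcal{S}_{n-k}$. Then there exist $\gamma,\gamma' \in \{0,\ldots,\varpi_{n,k}-1\}$ with $\alpha = \beta\varpi_{n,k}+\gamma = \beta'\varpi_{n,k}+\gamma'$. By the uniqueness clause of Proposition \ref{beta_gamma_decomposition} (equivalently, reading off the $n-k-1$ leftmost digits of $\alpha$ in the $\varpi$-system), we have $\beta = \beta'$, hence $q_\beta = q_{\beta'}$ by Theorem \ref{permutation_code}.

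Finally I would include a counting sanity check, which is automatic but makes the decomposition into a partition transparent: each $k$-orbit has exactly $\varpi_{n,k}$ elements (one for each admissible $\gamma$), there are $(n-k)!$ permutations $q \in \mathcal{S}_{n-k}$, and $(n-k)!\,\varpi_{n,k} = (n-k)!\cdot n(n-1)\cdots(n-k+1) = n!$. I do not anticipate a serious obstacle: all the work has already been done in Proposition \ref{beta_gamma_decomposition}, and this proposition is essentially a repackaging of that decomposition as a partition statement, in the same spirit as Lemma \ref{orbits} (the case $k=1$).
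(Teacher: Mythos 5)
Your proof is correct and takes essentially the same route as the paper: both rest on the decomposition $\alpha = \beta\varpi_{n,k}+\gamma$ of Proposition \ref{beta_gamma_decomposition}, with disjointness coming from its uniqueness. The only cosmetic difference is that the paper deduces coverage from the count $(n-k)!\,\varpi_{n,k}=n!$ applied to the disjoint orbits, whereas you obtain coverage directly from the existence part of that proposition and keep the counting as a sanity check.
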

\begin{proof}
The $k$-orbits are disjoint by uniqueness of the decomposition (\ref{beta_gamma}). They are in number $(n-k)!$ and contain $\varpi_{n,k}$ elements each. As $(n-k)!\varpi_{n,k}=n!$ in $\mathbb{N}$, the $k$-orbits span $\mathcal{S}_{n}$.
\end{proof}
In decomposition (\ref{beta_gamma}), $\beta$ specifies to which $k$-orbit $p_{\alpha}$ belongs and $\gamma$ specifies the rank of $p_{\alpha}$ within the $k$-orbit. The first element of the $k$-orbit has rank $\alpha^{first} = \beta \varpi_{n,k}$ (i.e., $\gamma=0$). The last element has rank $\alpha^{last} = \beta \varpi_{n,k} + \varpi_{n,k}-1$ (i.e., $\gamma=\varpi_{n,k}-1$). The element next to the last has rank $\alpha^{last}+1 = \beta\varpi_{n,k}+\varpi_{n,k} = (\beta+1)\varpi_{n,k}$. It is the first element of the next $k$-orbit $\mathcal{O}_{n,k}(q_{\beta+1})$, where $q_{\beta+1}$ is the element next to $q_{\beta}$ in $\mathcal{S}_{n-k}$.
\begin{proposition}\label{rank_k_orbit}
For $k \in \{0, \ldots ,n-2\}$, the digit $\alpha_{k}$ of the code of $p_{\alpha}$ is the rank of the $k$-orbit within the $(k+1)$-orbit containing $p_{\alpha}$.
\end{proposition}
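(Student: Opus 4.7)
The plan is to apply Proposition \ref{beta_gamma_decomposition} simultaneously at levels $k$ and $k+1$ and compare the two resulting decompositions of $\alpha$. At level $k$, I get $\alpha = \beta \varpi_{n,k} + \gamma$, where $\beta = \alpha_{n-2} \cdots \alpha_{k}$ is the code of a permutation $q_{\beta} \in \mathcal{S}_{n-k}$, so that $p_{\alpha} \in \mathcal{O}_{n,k}(q_{\beta})$. At level $k+1$, I get $\alpha = \beta' \varpi_{n,k+1} + \gamma'$, where $\beta' = \alpha_{n-2} \cdots \alpha_{k+1}$ is the code of $q_{\beta'} \in \mathcal{S}_{n-k-1}$, so that $p_{\alpha} \in \mathcal{O}_{n,k+1}(q_{\beta'})$, and $\gamma' \in \{0,\ldots,\varpi_{n,k+1}-1\}$ is the rank of $p_{\alpha}$ within that $(k+1)$-orbit.

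Next, I would use the recurrence $\varpi_{n,k+1}=(n-k)\varpi_{n,k}$ from Lemma \ref{the_recurrence_relation} to bridge the two. The code $\beta$ is obtained from $\beta'$ by appending the digit $\alpha_{k}$ on the right, which in the $\varpi$-system amounts to $\beta = \beta'(n-k) + \alpha_{k}$ in $\mathbb{Z}_{(n-k)!}$. Substituting this into the level-$k$ decomposition and comparing with the level-$(k+1)$ decomposition yields $\gamma' = \alpha_{k}\varpi_{n,k} + \gamma$, with $\alpha_{k} \in \mathbb{Z}_{n-k}$ and $\gamma \in \{0,\ldots,\varpi_{n,k}-1\}$.

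Finally, I would apply Proposition \ref{k-orbits} inside the $(k+1)$-orbit: since $\mathcal{O}_{n,k+1}(q_{\beta'})$ has $\varpi_{n,k+1}=(n-k)\varpi_{n,k}$ elements and is the disjoint union of the $k$-orbits of the $n-k$ consecutive preimages of $q_{\beta'}$ in $\mathcal{S}_{n-k}$, these $n-k$ component $k$-orbits occupy the consecutive rank intervals $[j\varpi_{n,k},(j+1)\varpi_{n,k})$ for $j=0,\ldots,n-k-1$. The relation $\gamma' = \alpha_{k}\varpi_{n,k} + \gamma$ places $p_{\alpha}$ in the $\alpha_{k}$-th such interval, so the $k$-orbit $\mathcal{O}_{n,k}(q_{\beta})$ has rank exactly $\alpha_{k}$ within $\mathcal{O}_{n,k+1}(q_{\beta'})$.

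The hard part is almost entirely bookkeeping: one must verify that the order in which the $n-k$ component $k$-orbits are listed inside the $(k+1)$-orbit coincides with the order given by the last digit $\alpha_{k}$ of $\beta$. This is essentially the content of the remark following Proposition \ref{k-orbits} (``the element next to the last has rank $(\beta+1)\varpi_{n,k}$''), so no new idea is needed; the rest is a direct consequence of the uniqueness of the $\varpi$-representation.
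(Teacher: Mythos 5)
Your proof is correct and follows essentially the same route as the paper's: both rest on the observation that $\alpha_{k}$ is the least significant digit of the level-$k$ code $\beta$ and hence indexes the $n-k$ consecutive $k$-orbits inside a $(k+1)$-orbit. Your version merely makes explicit the arithmetic ($\beta=(n-k)\beta'+\alpha_{k}$ and $\gamma'=\alpha_{k}\varpi_{n,k}+\gamma$) that the paper's two-line argument leaves implicit.
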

For example, Table \ref{table_S4} shows that $\mathcal{S}_{4}$ contains two 2-orbits within the 3-orbit $\mathcal{S}_{4}$. The ranks $0,1$ of these 2-orbits in $\mathcal{S}_{4}$ are specified by the digit $\alpha_{2}$.
\begin{proof}
The number of $k$-orbits within a $(k+1)$-orbit is $n-k$ (indeed, $(n-k)!/(n-(k+1))! = n-k$). When performing $\beta \rightarrow \beta+1$, the digit $\beta_{0}=\alpha_{k}$ ranges from 0 to $n-k-1$ in $\mathbb{Z}_{n-k}$. It specifies the rank of the $k$-orbit within the $(k+1)$-orbit.
\end{proof}

\begin{lemma}\label{last_element_k_orbit}
Let $p_{\alpha} \in \mathcal{S}_{n}$. There exists a largest $k \in \{0, \ldots, n-2\}$ and $q_{\beta} \in \mathcal{S}_{n-k}$ such that $p_{\alpha}$ is the last element of the $k$-orbit $\mathcal{O}_{n,k}(q_{\beta})$, and not the last element of the $(k+1)$-orbit containing this $k$-orbit.
\end{lemma}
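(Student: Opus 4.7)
The plan is to translate both conditions (``last of $k$-orbit'' and ``last of $(k+1)$-orbit'') into digit conditions on $\alpha$, and then read off $k$ directly from the digits. First I would invoke Proposition \ref{beta_gamma_decomposition} to write $\alpha = \beta\varpi_{n,k} + \gamma$; the element $p_\alpha$ is then the last of $\mathcal{O}_{n,k}(q_\beta)$ exactly when $\gamma = \varpi_{n,k}-1$. Combining identity (\ref{sum_k}) with the uniqueness of the $\varpi$-representation (Proposition \ref{number_system}) yields the clean criterion
\begin{displaymath}
p_\alpha \text{ is the last of its } k\text{-orbit} \iff \alpha_i = n-i-1 \text{ for } i = 0, \ldots, k-1.
\end{displaymath}
Applying the same criterion at level $k+1$, $p_\alpha$ is also the last of its $(k+1)$-orbit iff additionally $\alpha_k = n-k-1$.

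With these two characterizations in hand, the natural candidate is
\begin{displaymath}
k^{*} = \min\{\, i \in \{0, \ldots, n-2\} \,:\, \alpha_i \ne n-i-1 \,\},
\end{displaymath}
whenever this set is non-empty. By construction the digits $\alpha_0, \ldots, \alpha_{k^{*}-1}$ are all maximal, so $p_\alpha$ is the last of its $k^{*}$-orbit, while $\alpha_{k^{*}} \ne n-k^{*}-1$ prevents $p_\alpha$ from being the last of its $(k^{*}+1)$-orbit. For any $k > k^{*}$ the non-maximal digit $\alpha_{k^{*}}$ already sits among $\alpha_0, \ldots, \alpha_{k-1}$, so $p_\alpha$ fails even to be the last of its $k$-orbit; hence $k^{*}$ is the largest $k$ with the required property, and the corresponding $q_\beta$ is read off the leftmost digits $\alpha_{n-2} \cdots \alpha_{k^{*}}$ as in Proposition \ref{beta_gamma_decomposition}.

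The one step that actually requires a moment's thought is the existence of $k^{*}$. If the defining set is empty, then $\alpha_i = n-i-1$ for every $i = 0, \ldots, n-2$, and identity (\ref{sum_moins1}) forces $\alpha = -1 = n!-1$ in $\mathbb{Z}_{n!}$; in that case $p_\alpha$ is the terminal permutation of $\mathcal{S}_n$ (the last element of the $(n-1)$-orbit $\mathcal{S}_n$ itself), and no $k \in \{0, \ldots, n-2\}$ can satisfy both conditions simultaneously. This is the main obstacle: the lemma must be understood with the implicit hypothesis $\alpha \ne n!-1$, which I would state explicitly at the outset so that the construction of $k^{*}$ is unambiguous and the successor relation being prepared for later sections has a well-defined domain.
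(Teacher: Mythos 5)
Your proposal is correct and follows essentially the same route as the paper: the paper's proof likewise identifies $k$ as the index of the first non-maximal digit of $\alpha$ (after first disposing of the trivial case $\alpha_{0} \ne n-1$, where $k=0$), and your derivation of the digit criterion ``last of its $k$-orbit $\iff \alpha_{i}=n-i-1$ for $i<k$'' from Proposition \ref{beta_gamma_decomposition} and identity (\ref{sum_k}) just makes explicit what the paper takes for granted. Your point about $\alpha = n!-1$ is well taken: the paper's proof silently assumes the first non-maximal digit exists, and the lemma as literally stated does fail for the terminal permutation $p_{n!-1}$, all of whose digits are maximal by (\ref{sum_moins1}); this is harmless downstream because the lemma is only invoked in Proposition \ref{AB_BA} for transitions $\alpha \rightarrow \alpha+1$ with $\alpha \le n!-2$, but your explicit exclusion of that case is the more careful formulation.
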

\begin{proof}
If $p_{\alpha}$ is not the last element of the 1-orbit it belongs to, it is the last element of the 0-orbit $\{p_{\alpha}\}$. In this trivial case, $k=0$ and $p_{\alpha}=q_{\beta}$. Otherwise the last digit of $p_{\alpha}$ is $\alpha_{0}=n-1$. There exists a largest $k \ge 1$ such that $\alpha_{i}=n-i-1$ for $i=0, \ldots, k-1$, and $\alpha_{k} \ne n-i-1$. This means that $p_{\alpha}$ is the last element of nested $j$-orbits, $j=1, \ldots, k$.
\end{proof}

\section{Symmetries}\label{the_symmetries}

For compatibility with the cyclic shift, we adopt the convention that the positions of the symbols in a permutation are computed from the right and are considered as elements of $\mathbb{Z}_{n}$ (the position of the last symbol is 0 and the position of the first symbol is $n-1$).

According to scheme (\ref{schema}), symbol $x_{n-i}$ ($i \ge 2$) appears at step $n-i$ with the digit $\alpha_{i}$ as exponent of the cyclic shift. Its position in the generated permutation $p^{(n-i)}$ is therefore
\begin{displaymath}
\mypos_{n-i}(x_{n-i},p^{(n-i)}) = \alpha_{i}.
\end{displaymath} 
In particular,
\begin{displaymath}
\mypos_{n}(x_{n},p^{(n)}) = \alpha_{0}.
\end{displaymath} 

For a permutation $p=(a_{1}a_{2} \cdots a_{n-1}a_{n}) \in \mathcal{S}_{n}$, we introduce the \textit{mirror image} of $p$, $\overline{p}=(a_{n}a_{n-1} \cdots a_{2}a_{1})$.

\begin{proposition}\label{mirror_image}
The permutations $p_{\alpha}$ and $p_{\alpha '}$ are the mirror image of one another if and only if 
\begin{displaymath}
\alpha + \alpha ' = -1.
\end{displaymath}
\end{proposition}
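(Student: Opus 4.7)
The plan is to reduce to a digit-wise condition via Corollary \ref{alpha_plus_alphaprime}, which says $\alpha + \alpha' = -1$ iff $\alpha_i + \alpha'_i = n - i - 1$ in $\mathbb{Z}_{n-i}$ for every $i \in \{0, \ldots, n-2\}$. So it suffices to show that the digits of the codes of $p_\alpha$ and its mirror image $\overline{p_\alpha}$ satisfy precisely this relation.

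The key intermediate fact is that the mirror image commutes with the cyclic-shift generation: if $p = p^{(n)}$ is built up through the intermediate permutations $p^{(m)} \in \mathcal{S}_m$ of scheme (\ref{schema}), then $\overline{p}$ is built up through $\overline{p^{(1)}}, \overline{p^{(2)}}, \ldots, \overline{p^{(n)}} = \overline{p}$. Granting this, the observation before the proposition that $\alpha_i = \mypos_{n-i}(x_{n-i}, p^{(n-i)})$, together with the elementary fact that reversing a length-$m$ permutation sends each position $j$ from the right to position $m - 1 - j$, yields $\alpha'_i = (n-i) - 1 - \alpha_i$ for every $i$, which is the required digit relation.

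I would establish the intermediate fact by induction on $m$. The base case $m = 1$ is trivial since $\mathcal{S}_1 = \{(1)\}$. For the inductive step, it suffices to verify the intertwining identity
\begin{displaymath}
\overline{S_{m+1}^{a}(q)} = S_{m+1}^{m - a}(\overline{q}), \qquad q \in \mathcal{S}_m, \; a \in \mathbb{Z}_{m+1}.
\end{displaymath}
For $q = (q_1 \cdots q_m)$, expanding the left-hand side gives the reversal of $(q_{a+1} \cdots q_m \, x_{m+1} \, q_1 \cdots q_a)$, namely $(q_a \cdots q_1 \, x_{m+1} \, q_m \cdots q_{a+1})$; expanding the right-hand side from $\overline{q} = (q_m \cdots q_1)$ produces the same string. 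Applied at each generation step, this identity gives $\overline{p^{(m+1)}} = S_{m+1}^{m - \alpha_{n-m-1}}(\overline{p^{(m)}})$, closing the induction.

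The main obstacle is the intertwining identity: one must keep the cyclic indices straight, in particular the interaction of reversal with the appended symbol $x_{m+1}$ at the right end. Once it is established, the rest of the argument is formal, and the equivalence in the proposition follows by combining the digit relation with Corollary \ref{alpha_plus_alphaprime}; the converse direction is automatic because both mirror image and digit-wise negation are involutions.
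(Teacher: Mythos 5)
Your proof is correct and follows essentially the same route as the paper: reduce to the digit-wise condition $\alpha_i+\alpha'_i=-1$ via Corollary \ref{alpha_plus_alphaprime}, then show by induction along the generation scheme that mirroring intertwines the cyclic shift $S_{m+1}^{a}$ with $S_{m+1}^{m-a}$. If anything, your explicit verification of the intertwining identity is more careful than the paper's argument, which only tracks the position of the inserted symbol $x_n$ and leaves the reversal of the remaining symbols implicit.
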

For example, in $\mathbb{Z}_{5!}$ we have $84+35 = -1$, and in $\mathcal{S}_{5}$, $p_{84}=(51324)$ is the mirror image of $p_{35}=(42315)$.

\begin{proof}
The proof is by induction on $n$. For $n=2$, $p_{0}=(12)$, $p_{1}=(21)$, and $0+1=1=-1$ in $\mathbb{Z}_{2}$. Let $n > 2$. By Proposition \ref{beta_gamma},
\begin{displaymath}
\alpha = \beta\varpi_{n,1}+\alpha_{0}\varpi_{n,0},\quad \alpha ' = \beta '\varpi_{n,1}+\alpha '_{0}\varpi_{n,0}, \quad q_{\beta},q_{\beta '} \in  \mathcal{S}_{n-1}, \quad \alpha_{0},\alpha '_{0} \in \mathbb{Z}_{n}.
\end{displaymath}
By Corollary \ref{alpha_plus_alphaprime}, the condition $\alpha+\alpha ' = -1$ is equivalent to $\beta+\beta ' = -1$  and $\alpha_{0}+\alpha '_{0} = -1$. By induction hypothesis, $q_{\beta}$ is the mirror image of $q_{\beta '}$ in $\mathcal{S}_{n-1}$ if and only if $\beta +\beta ' = -1$. The condition $\alpha +\alpha '_{0} = -1$ is equivalent to $\alpha '_{0} = n-1-\alpha_{0}$, i.e., the ranks of $\alpha_{0}$ and $\alpha '_{0}$ are symmetrical in $\mathbb{Z}_{n}$. As these ranks are the positions of symbol $x_{n}$ when $p_{\alpha}$ and $p_{\alpha '}$ are generated by cyclic shift from $q_{\beta}$ and $q_{\beta '}$ respectively, we obtain the result.
\end{proof}

\begin{corollary}\label{palindrome}
The word constructed by concatenating the symbols of the permutations generated by cyclic shift is a palindrome.
\end{corollary}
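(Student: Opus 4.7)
The plan is to reduce the palindrome claim directly to Proposition \ref{mirror_image}. Let $W$ denote the word of length $n \cdot n!$ obtained by concatenating $p_{0}p_{1} \cdots p_{n!-1}$. A word equals its reverse if and only if, when we list the blocks in reverse order and reverse each block, we recover the original word.

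First I would observe that reversing $W$ yields $\overline{p_{n!-1}}\,\overline{p_{n!-2}} \cdots \overline{p_{0}}$, where $\overline{p}$ denotes the mirror image as defined just before Proposition \ref{mirror_image}. Next, for each $k \in \{0,\ldots,n!-1\}$, the block in position $k$ of the reversed word is $\overline{p_{n!-1-k}}$. I would then note that in $\mathbb{Z}_{n!}$ we have $k + (n!-1-k) = n!-1 = -1$, so by Proposition \ref{mirror_image} the permutation $p_{n!-1-k}$ is the mirror image of $p_{k}$, which gives $\overline{p_{n!-1-k}} = p_{k}$.

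Substituting back, the reversed word becomes $p_{0}p_{1} \cdots p_{n!-1} = W$, establishing that $W$ is a palindrome. I do not anticipate a real obstacle here: the whole content lies in the mirror-image characterization already proved, and the only thing to be careful about is the index arithmetic $n!-1 \equiv -1 \pmod{n!}$ so that the hypothesis of Proposition \ref{mirror_image} is correctly invoked for every pair $(k, n!-1-k)$, including the self-paired case when $n!-1-k = k$ (which would force $p_{k}$ to equal its own mirror image, and is consistent with the palindrome property on that single block).
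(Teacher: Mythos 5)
Your proposal is correct and follows essentially the same route as the paper: both reduce the palindrome property to Proposition \ref{mirror_image} via the observation that ranks $k$ and $n!-1-k$ sum to $-1$ in $\mathbb{Z}_{n!}$, so the corresponding permutations are mirror images; you merely spell out the block-reversal bookkeeping that the paper leaves implicit. (Your aside about a self-paired index is moot, since $n!-1$ is odd for $n\ge 2$ and so $n!-1-k=k$ never occurs, but it does no harm.)
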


\begin{proof}
Let $p_{\alpha} \in \mathcal{S}_{n}$. The rank symmetrical to $\alpha$ in $\mathbb{Z}_{n!}$ is $(n!-1)-\alpha=-(\alpha+1)$. By Proposition \ref{mirror_image}, $p_{-(\alpha+1)}$ is the mirror image of $p_{\alpha}$.
\end{proof}

The set $\mathcal{S}_{n}$ has in fact deeper symmetries, coming from the recursive structure of the $k$-orbits. 

According to Theorem \ref{permutation_code}, the generation of $\mathcal{S}_{n}$ by cyclic shift is obtained by performing $\alpha \rightarrow \alpha + 1$ for $\alpha \in \mathbb{Z}_{n!}$, and writing $\alpha$ in the $\varpi$-system. This determines each permutation $p_{\alpha}$. As $\alpha$ runs through $\mathbb{Z}_{n!}$, $p_{\alpha}$ runs through the $k$-orbits of $\mathcal{S}_{n}$. For a fixed $k$, and by Proposition \ref{rank_k_orbit}, $p_{\alpha}$ leaves a $k$-orbit to enter the next when, in the computation of $\alpha+1$, the carry propagates up to the digit $\alpha_{k}$, incrementing the rank $\beta$ of the $k$-orbit. This occurs when $\alpha = \beta\varpi_{n,k} + \varpi_{n,k}-1$.

\begin{proposition}\label{AB_BA}
Any two successive permutations of $\mathcal{S}_{n}$ are written as
\begin{displaymath}
p_{\alpha} = \overline{A}B, \qquad
p_{\alpha+1} = BA,
\end{displaymath}
with an integer $k \in \{0, \ldots, n-2\}$ such that
\begin{displaymath}
|A|=k+1.
\end{displaymath}
\end{proposition}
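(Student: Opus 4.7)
The plan is to pin down where in the $k$-orbit structure the transition $p_\alpha \to p_{\alpha+1}$ occurs, and then compute both permutations in closed form so that the splitting into $A$ and $B$ becomes immediate.

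First I would apply Lemma \ref{last_element_k_orbit} to obtain the largest $k\in\{0,\ldots,n-2\}$ and the $q_\beta\in\mathcal{S}_{n-k}$ such that $p_\alpha$ is the last element of $\mathcal{O}_{n,k}(q_\beta)$ but not the last of the $(k+1)$-orbit containing it. Equivalently, $\alpha_i=n-i-1$ for $i=0,\ldots,k-1$ and $\alpha_k\ne n-k-1$. Using Proposition \ref{k-orbits} together with the discussion following Definition \ref{def_k_orbits}, the $k$-orbit succeeding $\mathcal{O}_{n,k}(q_\beta)$ inside the common $(k+1)$-orbit is $\mathcal{O}_{n,k}(q_{\beta+1})$, where $q_{\beta+1}$ is the successor of $q_\beta$ in $\mathcal{S}_{n-k}$; because $\alpha_k\ne n-k-1$, $q_\beta$ is not the last element of its $1$-orbit in $\mathcal{S}_{n-k}$, so $q_{\beta+1}$ is obtained from $q_\beta$ simply by one circular left-shift.

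Second I would compute $p_\alpha$ by running scheme (\ref{schema}) starting from $q_\beta=(b_1\cdots b_{n-k})$ with the exponents $\alpha_{k-1}=n-k,\ \alpha_{k-2}=n-k+1,\ \ldots,\ \alpha_0=n-1$. At each step $i$ (proceeding $i=k-1,\ldots,0$) the cyclic shift of exponent $m-1$ on a sequence of length $m=n-i+1$ is nothing but a single right-shift, so the freshly inserted symbol $x_{n-i}$ lands at the front. A short induction on the number of completed steps then yields
\begin{displaymath}
p_\alpha=(x_n\,x_{n-1}\cdots x_{n-k+1}\,b_1\,b_2\cdots b_{n-k}).
\end{displaymath}
The analogous computation for $p_{\alpha+1}$, which is the first element of $\mathcal{O}_{n,k}(q_{\beta+1})$, uses exponents all equal to $0$; each $S^0$ is just the right-insertion $\iota$, so
\begin{displaymath}
p_{\alpha+1}=(b_2\,b_3\cdots b_{n-k}\,b_1\,x_{n-k+1}\,x_{n-k+2}\cdots x_n).
\end{displaymath}

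Finally I would set $B=(b_2\,b_3\cdots b_{n-k})$ and $A=(b_1\,x_{n-k+1}\,x_{n-k+2}\cdots x_n)$, so that $|A|=k+1$ and $\overline{A}=(x_n\,x_{n-1}\cdots x_{n-k+1}\,b_1)$. Direct concatenation then gives $\overline{A}B=p_\alpha$ and $BA=p_{\alpha+1}$, completing the proof. The only real obstacle is the iterated cyclic-shift calculation of step two; once the closed forms for the last and first elements of a $k$-orbit are in hand, reading off $A$ and $B$ is a formality.
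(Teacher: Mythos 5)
Your proof is correct and follows essentially the same route as the paper: identify the largest $k$ via Lemma \ref{last_element_k_orbit}, exhibit $p_{\alpha}$ as the last element of $\mathcal{O}_{n,k}(q_{\beta})$ and $p_{\alpha+1}$ as the first element of $\mathcal{O}_{n,k}(q_{\beta+1})$ with $q_{\beta+1}=C_{n-k}q_{\beta}$, and read off $A=(b_{1}x_{n-k+1}\cdots x_{n})$; you even make explicit two points the paper glosses over, namely that $\alpha_{k}\ne n-k-1$ is what guarantees $q_{\beta+1}=C_{n-k}q_{\beta}$, and that the $k=0$ case folds into the general one. The only blemish is an off-by-one in your second step: the permutation $p^{(n-i)}$ produced at step $i$ has length $m=n-i$, not $n-i+1$, and it is the exponent $\alpha_{i}=n-i-1=m-1$ that places $x_{n-i}$ at the front.
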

For example, in $\mathcal{S}_{5}$, $p_{39}=(542\underline{31})$ and 
$p_{40}=(\underline{31}245)$, with $39={0134}_{\varpi}$ and $40={0200}_{\varpi}$.

\begin{proof}
If $p_{\alpha}$ and $p_{\alpha+1}$ are in the same 1-orbit then
\begin{displaymath}
p_{\alpha} = (a_{1}a_{2} \cdots a_{n}), \qquad p_{\alpha+1} = (a_{2}\cdots a_{n}a_{1}).
\end{displaymath}
The result holds with $A=(a_{1})$, $B=(a_{2}\cdots a_{n})$, and this corresponds to $k=0$. Otherwise, by Lemma \ref{last_element_k_orbit}, there exists a largest $k \ge 1$ such that $p_{\alpha}$ is the last element of a $k$-orbit, and not the last element of a $(k+1)$-orbit. The elements of a $k$-orbit are generated by successively inserting the symbols $x_{n-k+1}, \ldots, x_{n}$ from a permutation $q_{\beta} \in \mathcal{S}_{n-k}$. The last element is
\begin{displaymath}
(x_{n} \cdots x_{n-k+1}b_{1} \cdots b_{n-k}),
\end{displaymath}
where $q_{\beta} = (b_{1} \cdots b_{n-k})$ is a permutation of the symbols $x_{1}, \ldots, x_{n-k}$. The first element of the next $k$-orbit is
\begin{displaymath}
(c_{1} \cdots c_{n-k}x_{n-k+1} \cdots x_{n}),
\end{displaymath}
where $q_{\beta+1} = (c_{1} \cdots c_{n-k})$. As $\mathcal{S}_{n-k}$ is generated by cyclic shift, $q_{\beta+1} = C_{n-k}q_{\beta}$, with $C_{n-k}$ the circular permutation in $\mathcal{S}_{n-k}$. We can now write
\begin{displaymath}
\begin{array}{ccc}
p_{\alpha}   &= (x_{n} \cdots x_{n-k+1}b_{1}b_{2} \cdots b_{n-k}) &= \overline{A}B\\
p_{\alpha+1} &= (b_{2} \cdots b_{n-k}b_{1}x_{n-k+1} \cdots x_{n}) &= BA,
\end{array}
\end{displaymath}
where $A=(b_{1}x_{n-k+1} \cdots x_{n})$ contains $k+1$ symbols.
\end{proof}

According to the Proposition, $k+1$ symbols have to be erased to the left of $p_{\alpha}$ so that the last symbols of $p_{\alpha}$ match the first symbols of $p_{\alpha+1}$. We define the \textit{weight} $e_{n}(\alpha) \in \{1, \ldots, n-1\}$ of the transition $\alpha \rightarrow \alpha+1$ as the number of symbols of $A$ in the above decomposition of $p_{\alpha}$ and $p_{\alpha+1}$.

We define the \textit{$\varpi$-ruler sequence} as
\begin{displaymath}
E_{n}=\{e_{n}(\alpha); \quad \alpha = 0, \ldots, n!-2\}.
\end{displaymath}

\begin{proposition}\label{E_palindrome}
The $\varpi$-ruler sequence is a palindrome.
\end{proposition}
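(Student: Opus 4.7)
The plan is to reduce the palindrome property to the mirror--image symmetry of Proposition \ref{mirror_image}. Set $\alpha':=n!-2-\alpha$; I want to show $e_{n}(\alpha)=e_{n}(\alpha')$.

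First I would observe that $\alpha+(\alpha'+1)=n!-1=-1$ in $\mathbb{Z}_{n!}$ and, similarly, $(\alpha+1)+\alpha'=-1$. Proposition \ref{mirror_image} then yields $p_{\alpha'+1}=\overline{p_{\alpha}}$ and $p_{\alpha'}=\overline{p_{\alpha+1}}$. Hence the pair of consecutive permutations $(p_{\alpha'},p_{\alpha'+1})$ is obtained from the pair $(p_{\alpha+1},p_{\alpha})$ by reversing each string and swapping their order.

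Next I would invoke Proposition \ref{AB_BA} to write $p_{\alpha}=\overline{A}B$ and $p_{\alpha+1}=BA$ with $|A|=e_{n}(\alpha)=:k+1$. Reversing both strings gives $p_{\alpha'+1}=\overline{B}A$ and $p_{\alpha'}=\overline{A}\,\overline{B}$. Setting $A':=A$ and $B':=\overline{B}$, this becomes $p_{\alpha'}=\overline{A'}B'$ and $p_{\alpha'+1}=B'A'$, i.e.\ a decomposition of the form prescribed by Proposition \ref{AB_BA} with $|A'|=|A|=k+1$.

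The delicate step is that $e_{n}(\alpha')$ is defined via the canonical decomposition, with $k$ coming from Lemma \ref{last_element_k_orbit}, so one must know the AB/BA decomposition is unique. I would establish this by characterizing $|B|$ as the length of the longest suffix of $p_{\alpha}$ that is also a prefix of $p_{\alpha+1}$: extending the canonical $B=(b_{2}\cdots b_{n-k})$ to include $b_{1}$ would force $(b_{1},b_{2},\ldots,b_{n-k})=(b_{2},\ldots,b_{n-k},b_{1})$, impossible unless all $b_{i}$ coincide, and extending further would place a symbol of $\{x_{n-k+1},\ldots,x_{n}\}$ among the first $n-k$ positions of $p_{\alpha+1}$, which list only symbols of $\{x_{1},\ldots,x_{n-k}\}$. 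Thus $|A|$ is intrinsic to $(p_{\alpha},p_{\alpha+1})$, mirroring preserves string lengths, and $e_{n}(\alpha')=|A'|=|A|=e_{n}(\alpha)$. The only genuine obstacle is this uniqueness verification; granted it, the palindrome property is an immediate consequence of Proposition \ref{mirror_image}.
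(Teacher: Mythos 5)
Your proof is correct, but it follows a genuinely different route from the paper's. The paper never manipulates the permutation strings: it sets $\alpha+\alpha'=-1$, invokes Corollary \ref{alpha_plus_alphaprime} to make the digits complementary, and reads $e_{n}(\alpha)=k+1$ off the number of trailing maximal digits of $\alpha$ (Lemma \ref{last_element_k_orbit}); complementarity converts ``$p_{\alpha}$ is last in a $k$-orbit'' into ``$p_{\alpha'}$ is first in a $k$-orbit'', hence ``$p_{\alpha'-1}$ is last in the preceding $k$-orbit'', and the weights agree. You instead argue on the permutations themselves, combining Proposition \ref{mirror_image} with the $\overline{A}B/BA$ decomposition of Proposition \ref{AB_BA}, and you correctly isolate the one point that makes this nontrivial: $e_{n}$ is defined via the canonical decomposition, so $|A|$ must be characterized intrinsically. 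Your identification of the canonical $|B|$ as the length of the longest suffix of $p_{\alpha}$ that is a prefix of $p_{\alpha+1}$ does this, though I would phrase the conclusion not as uniqueness of the factorization --- that is literally false: for $\alpha=n!/2-1$ one has $p_{\alpha+1}=\overline{p_{\alpha}}$ (e.g.\ $p_{11}=(4312)$, $p_{12}=(2134)$ in $\mathcal{S}_{4}$), giving a second, degenerate factorization with $B$ empty --- but as the observation that mirroring the pair $(p_{\alpha},p_{\alpha+1})$ into $(\overline{p_{\alpha+1}},\overline{p_{\alpha}})$ carries length-$m$ suffix/prefix matches to length-$m$ matches, so the two maxima, and hence the two canonical values of $|A|$, coincide. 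The paper's digit argument is shorter and sidesteps this issue entirely; your detour buys something the paper asserts in Section \ref{the_combinatorial_gray_code} without proof, namely that the canonical decomposition realizes the minimal erasure, $e_{n}(\alpha)=f_{n}(\alpha,\alpha+1)$.
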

\begin{proof}
If the ranks of $\alpha$ and $\alpha '$ are symmetrical in $\mathbb{Z}_{n!}$, $\alpha+\alpha ' = -1$, and $\alpha_{i}+\alpha '_{i} = -1$ for $i=0, \ldots, n-2$ by Corollary \ref{alpha_plus_alphaprime}. By the definition of $e_{n}(\alpha)$, we want to show that $e_{n}(\alpha)=e_{n}(\alpha '-1)$. If $p_{\alpha}$ is the last element of a $k$-orbit, then $\alpha_{i} = -1$ for $i=0, \ldots, k-1$, so that $\alpha '_{i} = 0$ for $i=0, \ldots, k-1$:  $p_{\alpha '}$ is the first element of a $k$-orbit and  $p_{\alpha '-1}$ is the last element of the previous $k$-orbit. Hence $e_{n}(\alpha)=e_{n}(\alpha '-1)=k+1$. If $p_{\alpha}$ is not the last element of a $k$-orbit, then $\alpha_{0} \ne -1$, $\alpha '_{0} \ne 0$, $p_{\alpha '}$ is not the first element of a 1-orbit. In this case $e_{n}(\alpha)=e_{n}(\alpha '-1)=1$.
\end{proof}

\begin{proposition}\label{ruler_sequence}
The number of terms of the $\varpi$-ruler sequence such that $e_{n}(\alpha)=k$ is 
\begin{displaymath}
(n-k)(n-k)!. 
\end{displaymath}
The sum of its $n!-1$ terms is 
\begin{displaymath}
W_{n}=1!+ 2!+ \ldots +n!-n.
\end{displaymath}
\end{proposition}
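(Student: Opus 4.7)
\medskip
\noindent\textbf{Proof plan.} The plan is to first translate the definition of $e_n(\alpha)$ into an explicit condition on the digits of $\alpha$, use that condition to count the $\alpha$'s of each weight, and finally sum by telescoping with the identity $j\cdot j!=(j+1)!-j!$.

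First, I would combine Proposition~\ref{AB_BA} with Lemma~\ref{last_element_k_orbit} to see that, writing $e_n(\alpha)=|A|=k+1$ in the notation of Proposition~\ref{AB_BA}, the weight $e_n(\alpha)=k$ occurs exactly when $p_\alpha$ is the last element of a $(k-1)$-orbit but not the last element of the enclosing $k$-orbit. In terms of the $\varpi$-digits of $\alpha$ this is the statement
\begin{displaymath}
\alpha_0=n-1,\ \alpha_1=n-2,\ \ldots,\ \alpha_{k-2}=n-k+1,\qquad \alpha_{k-1}\ne n-k,
\end{displaymath}
with the convention that the chain of equalities is empty when $k=1$ (so $e_n(\alpha)=1$ simply says $\alpha_0\ne n-1$). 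Because the $\alpha_i$'s are the unique digits produced by Proposition~\ref{number_system}, this condition partitions $\{0,\ldots,n!-2\}$ by the value of $e_n(\alpha)$.

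Next I would count: the first $k-1$ digits are frozen, the digit $\alpha_{k-1}\in\mathbb{Z}_{n-k+1}$ has $n-k$ admissible values (everything except $n-k$), and the remaining digits $\alpha_k,\ldots,\alpha_{n-2}$ range freely over $\mathbb{Z}_{n-k}\times\mathbb{Z}_{n-k-1}\times\cdots\times\mathbb{Z}_2$, contributing $(n-k)(n-k-1)\cdots 2=(n-k)!$ possibilities. Multiplying gives the claimed count $(n-k)(n-k)!$; one may sanity-check that $\sum_{k=1}^{n-1}(n-k)(n-k)!=n!-1$ via the identity~(\ref{factorial_system}), matching the total number of transitions.

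Finally, for the weighted sum I would write
\begin{displaymath}
\sum_{\alpha=0}^{n!-2}e_n(\alpha)=\sum_{k=1}^{n-1}k(n-k)(n-k)!=\sum_{j=1}^{n-1}(n-j)\,j\cdot j!
\end{displaymath}
after the substitution $j=n-k$, then replace $j\cdot j!$ by $(j+1)!-j!$ and split into two sums. Reindexing the first sum by $l=j+1$ yields
\begin{displaymath}
\sum_{j=2}^{n}(n-j+1)j!-\sum_{j=1}^{n-1}(n-j)j!=\sum_{j=2}^{n-1}j!+n!-(n-1)=\sum_{j=1}^{n}j!-n=W_n,
\end{displaymath}
after pairing the terms with matching $j!$. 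The main obstacle is really just the indexing at the endpoints of the telescoping step; once the characterization in terms of digits is in place, both assertions reduce to bookkeeping.
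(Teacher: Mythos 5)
Your proof is correct, and it splits naturally into two halves that compare differently with the paper. For the counting claim, your argument is essentially the paper's argument in different clothing: the paper counts, for each of the $(n-k)!$ $k$-orbits, the $n-k+1$ nested $(k-1)$-orbits and excludes the last one (via Proposition~\ref{rank_k_orbit}), whereas you freeze the digits $\alpha_0,\ldots,\alpha_{k-2}$, exclude one value of $\alpha_{k-1}\in\mathbb{Z}_{n-k+1}$, and let $\alpha_k,\ldots,\alpha_{n-2}$ run free; these are the same count read at the digit level rather than the orbit level, and your explicit digit characterization (taken from the proof of Lemma~\ref{last_element_k_orbit} together with Proposition~\ref{AB_BA}) is a legitimate, if slightly more computational, way to set it up. For the sum, you genuinely diverge: the paper proves $W_n=1!+\cdots+n!-n$ by induction on $n$, reindexing $\sum_k k(n-k)(n-k)!$ to exhibit $W_{n-1}+\sum_i i\cdot i!$ and then invoking identity~(\ref{factorial_system}), while you evaluate the sum directly by substituting $j=n-k$, telescoping with $j\cdot j!=(j+1)!-j!$, and tracking the endpoints. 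Your direct computation avoids the induction entirely (and in particular does not need identity~(\ref{factorial_system}), though your sanity check uses it), at the cost of the endpoint bookkeeping, which you have carried out correctly; the paper's induction trades that bookkeeping for the already-established factorial identity. Both routes are sound.
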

\begin{proof} We have $e_{n}(\alpha)=k \ge 1$ if and only if $p_{\alpha}$ is the last element of a $(k-1)$-orbit, and not the last element of a $k$-orbit. The number of $(k-1)$-orbits within a $k$-orbit is $n-k+1$ (see Proposition \ref{rank_k_orbit}). We exclude the last $(k-1)$-orbit, giving $n-k$ possibilities. The number of $k$-orbits is $(n-k)!$ so that there are $(n-k)(n-k)!$ possibilities for $e_{n}(\alpha)=k$. 

The formula for the sum is shown by induction. We have $W_{2}=1=1!+2!-2$, and for $n> 2$,
\begin{eqnarray}
\nonumber W_{n} &=& \sum_{k=1}^{n-1}k(n-k)(n-k)! = \sum_{k=0}^{n-2}(k+1)(n-1-k)(n-1-k)!\\
\nonumber          &=& \sum_{k=1}^{n-2}k(n-1-k)(n-1-k)! + \sum_{k=0}^{n-2}(n-1-k)(n-1-k)!\\
\nonumber          &=& W_{n-1} + \sum_{i=1}^{n-1}i.i! = 1! + \cdots + (n-1)!-(n-1)+n!-1=1! + \cdots +n!-n.
\end{eqnarray}
In the last line, we have used identity (\ref{factorial_system}) and the induction hypothesis.
\end{proof}

The $\varpi$-ruler sequence is analogous to the ruler sequence (sequence A001511 in Sloane \cite{oeis}). The difference is that the number of intermediate ticks increases with $n$ (Table \ref{table_E}).

\begin{table}[ht]
\footnotesize
\begin{center}
\begin{tabular}{| l | c |}
\hline
$n$ &$E_{n}$\\
\hline
2 &$1$\\
3 &$1^{2}21^{2}$\\
4 &$1^{3}21^{3}21^{3}31^{3}21^{3}21^{3}$\\
5 &$1^{4}21^{4}21^{4}21^{4}31^{4}21^{4}21^{4}21^{4}31^{4}21^{4}21^{4}21^{4}41^{4}21^{4}21^{4}21^{4}31^{4}21^{4}21^{4}21^{4}31^{4}21^{4}21^{4}21^{4}$\\
\hline
\end{tabular}
\end{center}
\caption{The $\varpi$-ruler sequence for $n = 2,3,4,5$ ($1^j$ denotes $1$ repeated $j$ times).}
\label{table_E}
\end{table}

\section{Combinatorial Gray code}\label{the_combinatorial_gray_code}
A combinatorial Gray code is a method for generating combinatorial objects so that successive objects differ by some pre-specified adjacency rule involving a minimality criterion (Savage \cite{savage_1997}). Such a code can be formulated as an Hamiltonian path or cycle in a graph whose vertices are the combinatorial objects to be generated. Two vertices are joined by an edge if they differ from each other in the pre-specified way.

The code associated with the $\varpi$-system corresponds to an Hamiltonian path in a weighted directed graph $G_{n}$.

\begin{definition}\label{weight}
The vertices of the digraph $G_{n}$ are the elements of $\mathcal{S}_{n}$. For two permutations (vertices) $p_{\alpha}$ and $p_{\alpha '}$, there is an arc from $p_{\alpha}$ to $p_{\alpha '}$ if and only if the last symbols of $p_{\alpha}$ match the first symbols of $p_{\alpha '}$ (there is no arc when there is no match). Let $p_{\alpha},p_{\alpha '} \in \mathcal{S}_{n}$ be two connected vertices in $G_{n}$. The \textit{weight} $f_{n}(\alpha,\alpha ') \in \{1, \ldots, n-1 \}$ associated with the arc $(p_{\alpha},p_{\alpha '})$ is the number of symbols that have to be erased to the left of $p_{\alpha}$ so that the last symbols of $p_{\alpha}$ match the first symbols of $p_{\alpha '}$.
\end{definition}

By Proposition \ref{AB_BA}, for each $\alpha$, there is an arc of weight $e_{n}(\alpha) = f_{n}(\alpha,\alpha+1)$ joining $p_{\alpha}$ to $p_{\alpha+1}$. This allows to define the Hamiltonian path
\begin{displaymath}
\mathbf{w}_{n} = \{(p_{\alpha},p_{\alpha+1}); \; \alpha = 0, \ldots, n!-2\}
\end{displaymath}
joining successive permutations. This path has total weight $W_{n}=1!+ \ldots + n!-n$ by Proposition \ref{ruler_sequence}. The path $\mathbf{w}_{n}$ can be closed into an Hamiltonian cycle by joining the last permutation $p_{n!-1}$ to the first $p_{0}$ by an arc of weight $n-1$:
\begin{displaymath}
(x_{n} \cdots x_{2}x_{1})  \xrightarrow{n-1} (x_{1}x_{2} \cdots x_{n}).
\end{displaymath}
Hence, an oriented path exists from any vertex to any other, so that $G_{n}$ is strongly connected.

Table \ref{table_G4} displays the weighted adjacency matrix of the digraph $G_{4}$ and the Hamiltonian path $\mathbf{w}_{4}$.

\begin{table}[ht]
\tiny
\begin{center}
\begin{tabular}{ c | c c c c |c c c c | c c c c || c c c c | c c c c | c c c c c|}
   &0 &1 &2 &3 &4 &5 &6 &7 &8 &9 &10 &11 &12 &13 &14 &15 &16 &17 &18 &19 &20 &21 &22 &23\\
\hline
0  &0 &\bf{1} &2 &3 &0 &0 &0 &3 &0 &0 &0 &3 &0 &0 &2 &3 &0 &0 &0 &3 &0 &0 &0 &3\\ 
1  &3 &0 &\bf{1} &2 &0 &0 &3 &0 &0 &3 &0 &0 &0 &3 &0 &0 &3 &0 &0 &2 &0 &0 &3 &0\\ 
2  &2 &3 &0 &\bf{1} &3 &0 &0 &0 &0 &2 &3 &0 &3 &0 &0 &0 &0 &0 &3 &0 &0 &3 &0 &0\\ 
3  &1 &2 &3 &0 &\bf{2} &3 &0 &0 &3 &0 &0 &0 &0 &0 &3 &0 &0 &3 &0 &0 &3 &0 &0 &0\\ 
\hline
4  &0 &0 &0 &3 &0 &\bf{1} &2 &3 &0 &0 &0 &3 &0 &0 &0 &3 &0 &0 &0 &3 &0 &0 &2 &3\\
5  &0 &3 &0 &0 &3 &0 &\bf{1} &2 &0 &0 &3 &0 &3 &0 &0 &2 &0 &0 &3 &0 &0 &3 &0 &0\\
6  &0 &2 &3 &0 &2 &3 &0 &\bf{1} &3 &0 &0 &0 &0 &0 &3 &0 &0 &3 &0 &0 &3 &0 &0 &0\\ 
7  &3 &0 &0 &0 &1 &2 &3 &0 &\bf{2} &3 &0 &0 &0 &3 &0 &0 &3 &0 &0 &0 &0 &0 &3 &0\\
\hline 
8  &0 &0 &0 &3 &0 &0 &0 &3 &0 &\bf{1} &2 &3 &0 &0 &0 &3 &0 &0 &2 &3 &0 &0 &0 &3\\ 
9  &0 &0 &3 &0 &0 &3 &0 &0 &3 &0 &\bf{1} &2 &0 &0 &3 &0 &0 &3 &0 &0 &3 &0 &0 &2\\ 
10 &3 &0 &0 &0 &0 &2 &3 &0 &2 &3 &0 &\bf{1} &0 &3 &0 &0 &3 &0 &0 &0 &0 &0 &3 &0\\
11 &2 &3 &0 &0 &3 &0 &0 &0 &1 &2 &3 &0 &\bf{3} &0 &0 &0 &0 &0 &3 &0 &0 &3 &0 &0\\
\hline
\hline
12 &0 &0 &2 &3 &0 &0 &0 &3 &0 &0 &0 &3 &0 &\bf{1} &2 &3 &0 &0 &0 &3 &0 &0 &0 &3\\ 
13 &0 &3 &0 &0 &3 &0 &0 &2 &0 &0 &3 &0 &3 &0 &\bf{1} &2 &0 &0 &3 &0 &0 &3 &0 &0\\ 
14 &3 &0 &0 &0 &0 &0 &3 &0 &0 &3 &0 &0 &2 &3 &0 &\bf{1} &3 &0 &0 &0 &0 &2 &3 &0\\
15 &0 &0 &3 &0 &0 &3 &0 &0 &3 &0 &0 &0 &1 &2 &3 &0 &\bf{2} &3 &0 &0 &3 &0 &0 &0\\
\hline
16 &0 &0 &0 &3 &0 &0 &0 &3 &0 &0 &2 &3 &0 &0 &0 &3 &0 &\bf{1} &2 &3 &0 &0 &0 &3\\ 
17 &3 &0 &0 &2 &0 &0 &3 &0 &0 &3 &0 &0 &0 &3 &0 &0 &3 &0 &\bf{1} &2 &0 &0 &3 &0\\ 
18 &0 &0 &3 &0 &0 &3 &0 &0 &3 &0 &0 &0 &0 &2 &3 &0 &2 &3 &0 &\bf{1} &3 &0 &0 &0\\ 
19 &0 &3 &0 &0 &3 &0 &0 &0 &0 &0 &3 &0 &3 &0 &0 &0 &1 &2 &3 &0 &\bf{2} &3 &0 &0\\
\hline
20 &0 &0 &0 &3 &0 &0 &2 &3 &0 &0 &0 &3 &0 &0 &0 &3 &0 &0 &0 &3 &0 &\bf{1} &2 &3\\ 
21 &0 &0 &3 &0 &0 &3 &0 &0 &3 &0 &0 &2 &0 &0 &3 &0 &0 &3 &0 &0 &3 &0 &\bf{1} &2\\
22 &0 &3 &0 &0 &3 &0 &0 &0 &0 &0 &3 &0 &3 &0 &0 &0 &0 &2 &3 &0 &2 &3 &0 &\bf{1}\\ 
23 &3 &0 &0 &0 &0 &0 &3 &0 &0 &3 &0 &0 &2 &3 &0 &0 &3 &0 &0 &0 &1 &2 &3 &0\\ 
\hline
\end{tabular}
\end{center}
\caption{The adjacency matrix of the digraph $G_{4}$. Lines delineate the 1-orbits. Double lines delineate the 2-orbits. Bold entries on the upper diagonal indicate the Hamiltonian path corresponding to the $\varpi$-system code, and forming the $\varpi$-ruler sequence.}
\label{table_G4}
\end{table}

\begin{proposition}\label{vertex_degree}
Each vertex of $G_{n}$ has exactly $j!$ in-arcs of weight $j$ and $j!$ out-arcs of weight $j$, $j=1, \ldots, n-1$. Hence the vertices of $G_{n}$ have $L=1!+ \cdots + (n-1)!$ as in- and out-degree, and $G_{n}$ is $L$-regular.
\end{proposition}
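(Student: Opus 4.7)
The plan is to fix a vertex and count, for each prescribed weight $j \in \{1,\ldots,n-1\}$, exactly how many target vertices (resp.\ source vertices) give an arc of weight $j$; the symmetry between in- and out-arcs will be immediate once the counting is set up correctly.

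Fix $p_\alpha = (a_1 a_2 \cdots a_n)$ and count out-arcs of weight $j$. By Definition \ref{weight}, an arc of weight $j$ from $p_\alpha$ to $p_{\alpha'} = (b_1 \cdots b_n)$ requires that after erasing $j$ symbols on the left of $p_\alpha$, the remaining suffix matches the prefix of $p_{\alpha'}$, i.e., $b_1 = a_{j+1},\; b_2 = a_{j+2},\;\ldots,\; b_{n-j} = a_n$, while the $j$ remaining positions $b_{n-j+1},\ldots,b_n$ form an arbitrary permutation of the leftover symbols $\{a_1,\ldots,a_j\}$. This gives $j!$ candidates. I would then verify that such a candidate really has weight exactly $j$ and not some smaller $j' < j$: if it had weight $j'$, then in particular $b_1 = a_{j'+1}$, but we also imposed $b_1 = a_{j+1}$, forcing $a_{j'+1} = a_{j+1}$ and hence $j' = j$ by distinctness of the symbols. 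So the $j!$ candidates are precisely the out-arcs of weight $j$.

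The count of in-arcs of weight $j$ at a fixed target $p_{\alpha'} = (b_1 \cdots b_n)$ is done symmetrically: the constraint $a_{j+i} = b_i$ for $i=1,\ldots,n-j$ fixes the last $n-j$ entries of $p_\alpha$, while the first $j$ entries $a_1,\ldots,a_j$ range freely over permutations of $\{b_{n-j+1},\ldots,b_n\}$, again $j!$ choices; exact-weight is argued as before by comparing $a_n = b_{n-j}$ against the hypothetical $a_n = b_{n-j'}$. Finally, summing over $j=1,\ldots,n-1$ gives in- and out-degree equal to $\sum_{j=1}^{n-1} j! = 1! + \cdots + (n-1)! = L$, establishing $L$-regularity.

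There is no real obstacle here; the only subtle point is making sure the weight in the statement is the \emph{exact} overlap length, not an upper bound, so that the $j!$ permutations counted for a given $j$ are not accidentally also counted for some smaller $j'$. Distinctness of the $n$ symbols handles this cleanly, as sketched above, and the rest is bookkeeping.
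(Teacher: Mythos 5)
Your proposal is correct and follows essentially the same route as the paper: fix a vertex, note that an arc of weight $j$ forces the overlapping $n-j$ symbols and leaves a free permutation of the remaining $j$ symbols, giving $j!$ arcs in each direction and degree $\sum_{j=1}^{n-1} j! = L$. Your extra check that the overlap length is \emph{exactly} $j$ (via distinctness of the symbols, since $b_1 = a_{j+1}$ pins down $j$) is a point the paper leaves implicit, but it does not change the argument.
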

\begin{proof}
Let us consider the arcs of weight $j \in \{1, \ldots ,n-1\}$ joining a vertex to another in $G_{n}$:
\begin{displaymath}
(a_{1} \cdots a_{j}b_{1} \cdots b_{n-j}) \xrightarrow{j} (b_{1} \cdots b_{n-j}c_{1} \cdots c_{j}),
\end{displaymath}
where the c's are a permutation of the a's. There are $j!$ possibilities for the $c$'s, the $a$'s and the $b$'s being fixed. Hence $j!$ arcs of weight $j$ leave each vertex. Similarly, there are $j!$ possibilities for the $a$'s, the $b$'s and the $c$'s being fixed, so that $j!$ arcs of weight $j$ enter each vertex.
\end{proof}

We conjecture that the Hamiltonian path $\mathbf{w}_{n}$ joining successive permutations in the digraph $G_{n}$ is of minimal total weight. Assuming this conjecture we may state:\\

\textit{The $\varpi$-system is a combinatorial Gray code listing the permutations generated by cyclic shift. The adjacency rule is that the minimal number of symbols is erased to the left of a permutation so that the last symbols of the permutation match the first symbols of the next permutation.}

\section{Acknowledgements}
Marco Castera initiated the problem which motivated this study. Philippe Paclet discovered the weighted directed graph described in section \ref{the_combinatorial_gray_code}.

\bigskip
\hrule
\bigskip

\noindent 2000 \textit{Mathematics Subject Classification}: Primary 05A05.

\noindent \textit{Keywords}: permutations, cyclic shift, number system, palindrome, combinatorial Gray code.

\bigskip
\hrule
\bigskip

\end{document}